\newcommand*{\KAI}{\CJKfamily{kai}}
\newcommand{\kai}{\CJKfamily{kai}}      
\newcommand{\hei}{\CJKfamily{hei}}      
\newtheorem{dingyi}{\hei Definition~}[section]
\newtheorem{dingli}{\hei Theorem~}[section]
\newtheorem{yinli}[dingli]{\hei Lemma~}
\newtheorem{tuilun}[dingli]{\hei Corollary~}
\newtheorem{mingti}[dingli]{\hei Proposition~}
\theoremstyle{definition}
\theoremstyle{remark}
\numberwithin{equation}{section}
\begin{document}
\setlength{\parindent}{2em}                 
\setlength{\parskip}{3pt plus1pt minus1pt}  
\setlength{\abovedisplayskip}{2pt plus1pt minus1pt}     
\setlength{\belowdisplayskip}{6pt plus1pt minus1pt}     


\title{On the Center of Two-parameter Quantum Groups $U_{r,
s}(\mathfrak{so}_{2n+1})$}

\author[Hu]{Naihong Hu$^\star$}
\address{Department of Mathematics, Shanghai Key Laboratory of Pure Mathematics and Mathematical
Practice, East China Normal University,
Min Hang Campus, Dong Chuan Road 500, Shanghai 200241, PR China}
\email{nhhu@math.ecnu.edu.cn}
\thanks{$^\star$N.H., Corresponding author, supported in part by the NNSF (Grant: 11271131), the National \& Shanghai Leading
Academic Discipline Projects (Project Number: B407).}

\author[Shi]{Yuxing Shi}
\address{Department of Mathematics, Shanghai Key Laboratory of Pure Mathematics and Mathematical
Practice, East China Normal University,
Min Hang Campus, Dong Chuan Road 500, Shanghai 200241, PR China}\email{52100601007@student.ecnu.cn}

\subjclass{Primary 17B37, 81R50; Secondary 17B35}
\date{October 22, 2013}


\keywords{two-parameter quantum group, Harish-Chandra
homomorphism, Rosso form}
\begin{abstract}
The paper mainly considers the center of two-parameter quantum groups $U_{r,s}(\mathfrak{so}_{2n+1})$ via an analogue
of the Harish-Chandra homomorphism. In the case when $n$ is odd, the Harish-Chandra homomorphism is not injective
in general. When $n$ is even, the Harish-Chandra homomorphism is injective and  the center of
two-parameter quantum groups $U_{r,s}{(\mathfrak{so}_{2n+1})}$ is described, up to isomorphism.
\end{abstract}

\maketitle
\section{Introduction}
From down-up algebras approach (\cite{BW1}), Benkart-Witherspoon (\cite{BW2})  recovered Takeuchi's definition of two-parameter quantum groups of type $A$. Since then, a
systematic study of the two-parameter quantum groups has been going
on, see, for instance, (\cite{BW2}, \cite{BW3}) for type $A$; (\cite{BGH1}, \cite{BGH2}) for types $B, C, D$; \cite{HS}, \cite{BH} for types $G_2, E_6, E_7,
E_8$. For a unified definition, see (\cite{HP1}, \cite{HP2}). In this paper we determine the center of the two-parameter quantum groups of type $B_n$ when $n$ is even.

Much work has been done on the center of quantum groups for finite-dimensional simple Lie algebras (\cite{B}, \cite{D}, \cite{JL}, \cite{R}, \cite{RTF}, \cite{T}), and also for (generalized) Kac-Moody (super)algebras (\cite{E}, \cite{Hong}, \cite{KT}), and for two-parameter quantum group of type $A$ \cite{BKL}. The approach taken in many of these papers (and adopted here as well) is to define a bilinear form on the quantum group which is invariant under the adjoint action. This quantum version of the Killing form is often referred to in the one-parameter setting as the Rosso form (see \cite{R}). The next
step involves constructing an analogue $\xi$ of the Harish-Chandra map. It is straightforward to show that the map $\xi$ is an injective algebra homomorphism. The main difficulty lies in determining the image of $\xi$ and in finding enough central elements to prove that the map $\xi$ is surjective. In the two-parameter case for type $A_n$ \cite{BKL}, a new phenomenon arises: the $n$ odd and even cases behave differently. Additional central elements arise when $n$ is even. As for type $B_n$, we will show that $\xi$ is an injective map only when $n$ is even.

The paper is organized as follows. In section 2, the definition of two-parameter quantum group $U_{r,
s}(\mathfrak{so}_{2n+1})$ and some related properties are given. In section 3, we introduce a Harish-Chandra homomorphism $\xi$, and prove that $\xi$ is injective when $n$ is even. In section 4, we determine the center of two-parameter quantum groups $U_{r,s}{(\mathfrak{so}_{2n+1})}$ in the sense of isomorphisms when $n$ is even.

\section{$U_{r,s}(\mathfrak{so}_{2n+1})$ and related properties}
\subsection{Definition of $U_{r,s}(\mathfrak{so}_{2n+1})$}
The definition of two-parameters quantum groups of type $B$ was given in ~\cite{{BGH1}}. Here $\mathbb{K}=\mathbb{Q}(r,s)$ is a subfield of $\mathbb C$ with $r$, $s\in \mathbb{C}$, $r^{2}+s^{2}\neq
 1$, $r\neq s$. $\Phi$ is the root system of $\mathfrak{so}_{2n+1}$ with $\Pi$ a base of simple roots,~which is a finite subset of a Euclidean space $E=\mathbb{R}^n$ with an inner product $( , )$.~
 Let $\varepsilon_{1},\cdots,\varepsilon_{n}$~denote a normal orthogonal basis of ~$E$,~then
 $\Pi=\{\alpha_{i}=\varepsilon_{i}-\varepsilon_{i+1},1\leq i\leq n-1,\alpha_{n}=\varepsilon_{n}\}$,
 $\Phi=\{\pm\varepsilon_{i}\pm\varepsilon_{j},~1\leq i<j\le n;~\pm\varepsilon_{j},~1\leq j\leq n\}$.
Denote $r_{i}=r^{(\alpha_{i},\alpha_{i})}$,
~$s_{i}=s^{(\alpha_{i},\alpha_{i})}.$

\begin{dingyi}
\begin{upshape}
Let $U=U_{r, s}(\mathfrak{so}_{2n+1})$ be the associative algebra over $\mathbb{K}$ generated by $e_{i},~f_{i},~
\omega^{\pm1}_{i},~\omega^{\prime\pm1}_{i}(i=1,\cdots,n)$, subject to relations \upshape{$(B1)$---$(B7)$:}
\begin{itemize}
\item[$(B1)$] $\omega^{\pm1}_{i}, \omega^{\prime\pm1}_{j}$ all commute with one another and $\omega_{i}\omega^{-1}_{i}=1=\omega^\prime_{j}\omega^{\prime-1}_{j}$ for $1\leq i, j\leq n$.
\item[$(B2)$] For $1\leq i\leq n $,~$1\leq j<n$,~there are the following identities:
\begin{eqnarray*}
\omega_{j}e_{i}\omega^{-1}_{j}=r^{(\varepsilon_{j},
\alpha_{i})}_{j}s^{(\varepsilon_{j+1}, \alpha_{i})}_{j}e_{i},
&\quad\quad&
\omega_{j}f_{i}\omega^{-1}_{j}=r^{-(\varepsilon_{j}, \alpha_{i})}_{j}s^{-(\varepsilon_{j+1}, \alpha_{i})}_{j}f_{i}, \\
\omega_{n}e_{j}\omega^{-1}_{n}=r^{2(\varepsilon_{n},
\alpha_{j})}_{n}e_{j}, &\quad\quad&
\omega_{n}f_{j}\omega^{-1}_{n}=r^{-2(\varepsilon_{n}, \alpha_{j})}_{n}f_{j}, \\
\omega_{n}e_{n}\omega^{-1}_{n}=r^{(\varepsilon_{n},
\alpha_{n})}_{n}s^{-(\varepsilon_{n}, \alpha_{n})}_{n}e_{n},
&\quad\quad& \omega_{n}f_{n}\omega^{-1}_{n}=r^{-(\varepsilon_{n},
\alpha_{n})}_{n}s^{(\varepsilon_{n}, \alpha_{n})}_{n}f_{n}.
\end{eqnarray*}
\item[$(B3)$] For $1\leq i\leq n$, $1\leq j<n$,~there are the following identities:
\begin{eqnarray*}
\omega^\prime_{j}e_{i}\omega^{\prime-1}_{j}=s^{(\varepsilon_{j},
\alpha_{i})}_{j}r^{(\varepsilon_{j+1}, \alpha_{i})}_{j}e_{i},
&\quad\quad&
\omega^\prime_{j}f_{i}\omega^{\prime-1}_{j}=s^{-(\varepsilon_{j}, \alpha_{i})}_{j}r^{-(\varepsilon_{j+1}, \alpha_{i})}_{j}f_{i}, \\
\omega^\prime_{n}e_{j}\omega^{\prime-1}_{n}=s^{2(\varepsilon_{n},
\alpha_{j})}_{n}e_{j}, &\quad\quad&
\omega^\prime_{n}f_{j}\omega^{\prime-1}_{n}=s^{-2(\varepsilon_{n}, \alpha_{j})}_{n}f_{j}, \\
\omega^\prime_{n}e_{n}\omega^{\prime-1}_{n}=s^{(\varepsilon_{n},
\alpha_{n})}_{n}r^{-(\varepsilon_{n}, \alpha_{n})}_{n}e_{n},
&\quad\quad&
\omega^\prime_{n}f_{n}\omega^{\prime-1}_{n}=s^{-(\varepsilon_{n},
\alpha_{n})}_{n}r^{(\varepsilon_{n}, \alpha_{n})}_{n}f_{n}.
\end{eqnarray*}
\item[$(B4)$] For $1\leq i, j\leq n$,~there are the following identities:
\begin{displaymath}
[e_{i},
f_{j}]=\delta_{ij}\frac{\omega_{i}-\omega^{\prime}_{i}}{r_{i}-s_{i}}.
\end{displaymath}
\item[$(B5)$] For~$|i-j|>1$,~there are $(r,s)$-Serre relations:
\begin{displaymath}
[e_{i}, e_{j}]=[f_{i}, f_{j}]=0.
\end{displaymath}

\item[$(B6)$] For~$1\leq i<n, 1\leq j<n-1$,~there are $(r,s)$-Serre relations:
$$
e^2_{i}e_{i+1}-(r_{i}+s_{i})e_{i}e_{i+1}e_{i}+(r_{i}s_{i})e_{i+1}e^2_{i}=0,
$$
$$
e^2_{j+1}e_{j}-(r^{-1}_{j+1}+s^{-1}_{j+1})e_{j+1}e_{j}e_{j+1}+(r^{-1}_{j+1}s^{-1}_{j+1})e_{j}e^2_{j+1}=0,
$$
$e^3_{n}e_{n-1}-(r^{-2}_{n}+r^{-1}_{n}s^{-1}_{n}+s^{-2}_{n})e^2_{n}e_{n-1}e_{n}$\\
$=-(r^{-1}_{n}s^{-1}_{n})(r^{-2}_{n}+r^{-1}_{n}s^{-1}_{n}+s^{-2}_{n})e_{n}e_{n-1}e^2_{n}+(r^{-3}_{n}s^{-3}_{n})e_{n-1}e^3_{n}.$

\item[$(B7)$] For $1\leq i<n, 1\leq j<n-1$,~there are $(r,s)$-Serre relations:
$$
f_{i+1}f^2_{i}-(r_{i}+s_{i})f_{i}f_{i+1}f_{i}+(r_{i}s_{i})f^2_{i}f_{i+1}=0,
$$
$$
f_{j}f^2_{j+1}-(r^{-1}_{j+1}+s^{-1}_{j+1})f_{j+1}f_{j}f_{j+1}+(r^{-1}_{j+1}s^{-1}_{j+1})f^2_{j+1}f_{j}=0,
$$
$f_{n-1}f^3_{n}-(r^{-2}_{n}+r^{-1}_{n}s^{-1}_{n}+s^{-2}_{n})f_{n}f_{n-1}f^2_{n}$\\
$=-(r^{-1}_{n}s^{-1}_{n})(r^{-2}_{n}+r^{-1}_{n}s^{-1}_{n}+s^{-2}_{n})f^2_{n}f_{n-1}f_{n}+(r^{-3}_{n}s^{-3}_{n})f^3_{n}f_{n-1}.$
\end{itemize}

The Hopf algebra structure on $U_{r,s}(\mathfrak{so}_{2n+1})$ with the comultiplication,
the counit and the antipode as follows
\begin{eqnarray*}
\Delta(\omega^{\pm1}_{i})=\omega^{\pm1}_{i}\otimes \omega^{\pm1}_{i}
, &\quad\quad& \Delta(\omega^{\prime\pm1}_{i})=\omega^{\prime\pm1}_{i}\otimes\omega^{\prime\pm1}_{i}, \\
\Delta(e_{i})=e_{i}\otimes1+\omega_{i} \otimes
 e_{i}, &\quad\quad& \Delta(f_{i})=1\otimes f_{i}+f_{i}\otimes\omega^{\prime}_{i}, \\
 \varepsilon(\omega^{\pm1}_{i})=\varepsilon(\omega^{\prime\pm1}_{i})=1, &\quad\quad&
 \varepsilon(e_{i})= \varepsilon(f_{i})=0, \\
 S(\omega^{\pm1}_{i})=\omega^{\mp1}_{i}, &\quad\quad&
 S(\omega^{\prime\pm1}_{i})=\omega^{\prime\mp1}_{i}, \\
 S(e_{i})=-\omega^{-1}_{i}e_{i}, &\quad\quad&
 S(f_{i})=-f_{i}\omega^{\prime-1}_{i}.
\end{eqnarray*}
\end{upshape}
\end{dingyi}
Let $\Lambda=\bigoplus^n_{i=1}\mathbb{Z}\varpi_{i}$ be the weight lattice of $\mathfrak{so}_{2n+1}$, where $\varpi_i$ are the fundamental weights. Let $\Lambda^+=\{\lambda\in\Lambda\mid(\alpha_i,\lambda)\geq 0,~1\leq i\leq n\}$ denote the set of dominant weights for $\mathfrak{so}_{2n+1}$.~$Q=\bigoplus_{i=1}^n\mathbb{Z}\alpha_i$ denote the root lattice and set $Q^+=\bigoplus_{i=1}^n\mathbb{Z}_{\geq 0}\alpha_i$.

$U$ has a triangular decomposition $U\cong U^-\otimes U_0\otimes U^+$, where $U_0$ is the subalgebra generated by $\omega^{\pm}_{i},~\omega^{\prime\pm}_{i}$, and $U^+$ (resp. $U^-$) is the subalgebra generated
by $e_i$ (resp. $f_i$). Let $\mathcal{B}$ (resp. $\mathcal{B}^\prime$) denote the Hopf subalgebra of $U$ generated by $e_j$, $\omega^{\pm}_{j}$ (resp. $f_j$, $\omega^{\prime\pm}_{j}$)
with $1\leq j\leq n$.

\begin{mingti}\cite{BGH1}
\begin{upshape}
There exists a unique skew-dual pairing $\langle\,,\,\rangle$:$\
\mathcal{B}^\prime\times\mathcal{B}\rightarrow
\mathbb{K}$ of the Hopf subalgebras $\mathcal{B}$ and $\mathcal{B}^\prime$ such that
\begin{eqnarray*}
\langle f_{i}, e_{j}\rangle=\delta_{ij}\frac{1}{s_{i}-r_{i}},
&\quad\quad& 1\leq
i, j\leq n, \\
\langle\omega^\prime_{i}, \omega_{j}\rangle=r^{2(\varepsilon_{j},
\alpha_{i})}s^{2(\varepsilon_{j+1}, \alpha_{i})},
&\quad\quad& 1\leq i\leq n, 1\leq j< n, \\
\langle\omega^\prime_{i}, \omega_{n}\rangle=r^{2(\varepsilon_{n},
\alpha_{i})},
&\quad\quad&1\leq i< n, \\
\langle\omega^{\prime\pm1}_{i},
\omega^{-1}_{j}\rangle=\langle\omega^{\prime\pm1}_{i},
\omega_{j}\rangle^{-1}= \langle\omega^{\prime}_{i},
\omega_{j}\rangle^{\mp1}, &\quad\quad&1\leq
i, j\leq n, \\
\langle\omega^\prime_{n}, \omega_{n}\rangle=rs^{-1}.
\end{eqnarray*}
and all other pairs of generators are 0. Moreover, we have
$\langle S(a), S(b)\rangle=\langle a, b\rangle$ for $a\in\mathcal{B}^\prime, b\in\mathcal{B}$.
\end{upshape}
\end{mingti}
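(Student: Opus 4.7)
The argument splits into three pieces: uniqueness, existence, and the antipode identity.

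\emph{Uniqueness} is formal. Any skew-dual Hopf pairing $\langle\,,\,\rangle\colon\mathcal{B}'\times\mathcal{B}\to\mathbb{K}$ satisfies
\[
\langle aa',b\rangle = \sum\langle a,b_{(1)}\rangle\langle a',b_{(2)}\rangle,\qquad \langle a,bb'\rangle = \sum\langle a_{(1)},b'\rangle\langle a_{(2)},b\rangle,
\]
together with $\langle 1,b\rangle=\varepsilon(b)$ and $\langle a,1\rangle=\varepsilon(a)$. Since $\mathcal{B}$ and $\mathcal{B}'$ are generated as algebras by the elements listed in the statement, an induction on monomial length reduces every value $\langle a,b\rangle$ to a polynomial expression in the prescribed generator pairings, so there is at most one such pairing.

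\emph{Existence} is the heart of the matter. The standard strategy is to start on the free associative algebras $T(\mathcal{B}')$ and $T(\mathcal{B})$ generated by the relevant symbols, endowing them with the Hopf structure determined by the formulas for $\Delta,\varepsilon,S$ in the definition above; the two displayed axioms together with the tabulated data on generators then extend uniquely to a bilinear form on $T(\mathcal{B}')\times T(\mathcal{B})$. The remaining task is to verify that every defining relation of $\mathcal{B}$ (resp.\ $\mathcal{B}'$) lies in the right (resp.\ left) radical of this form. The commutation relations $(B1)$, the cross-relations $(B2)$--$(B3)$ and the quadratic $(r,s)$-Serre identities in $(B5),(B6)$ are dispatched by direct computation after applying the coproduct $\Delta(e_i)=e_i\otimes 1+\omega_i\otimes e_i$ (and its iterates) to the Serre expression and invoking the tabulated toral values; these calculations are entirely parallel to the type-$A$ case in \cite{BKL}, \cite{BW2}.

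\emph{The main obstacle} is the cubic Serre relation in $(B6)$ between $e_n$ and $e_{n-1}$, peculiar to type $B$. Testing this relation against an arbitrary element of $\mathcal{B}'$ reduces, via the iterated coproduct and the two displayed axioms, to an algebraic identity in $r_n^{\pm 1}, s_n^{\pm 1}$ weighted by various toral pairings. It is precisely the normalization $\langle\omega'_n,\omega_n\rangle = rs^{-1}$, together with the values $\langle\omega'_i,\omega_n\rangle$ and $\langle\omega'_n,\omega_i\rangle$ fixed in the statement, that forces this identity to hold; any other choice would leave a nonzero residue, and this is where the type-$B$ verification genuinely departs from type $A$. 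Once the bilinear form has been shown to descend to a skew-dual Hopf pairing on $\mathcal{B}'\times\mathcal{B}$, the identity $\langle S(a),S(b)\rangle=\langle a,b\rangle$ is a general consequence of the Hopf-pairing axioms and the defining property of $S$: one verifies it directly on pairs of generators using $S(\omega_i)=\omega_i^{-1}$, $S(e_i)=-\omega_i^{-1}e_i$, $S(f_i)=-f_i\omega_i^{\prime -1}$, and extends it to all products by induction, using that $S$ is both an anti-algebra and an anti-coalgebra map on each side.
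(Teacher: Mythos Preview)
The paper does not actually prove this proposition: it is quoted verbatim from \cite{BGH1} and stated without proof, as indicated by the citation attached to the \texttt{mingti} environment. So there is no ``paper's own proof'' to compare against here.

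That said, your outline is the standard one, and it is essentially the argument given in \cite{BGH1} (and, for type $A$, in \cite{BW2}): reduce uniqueness to generators via the skew-pairing axioms, define the form on the tensor algebras, and check that each defining relation lies in the appropriate radical. Your identification of the cubic Serre relation in $(B6)$ as the place where type $B$ genuinely differs from type $A$ is accurate, and the antipode compatibility is indeed a formal consequence once the pairing descends. One small correction: you should also mention the companion cubic relation in $(B7)$ on the $\mathcal{B}'$ side, which must likewise be checked to lie in the left radical; by the symmetry of the construction this is parallel to the $(B6)$ computation, but strictly speaking it is a separate verification.
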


\begin{tuilun}\cite{BGH1}
\begin{upshape}
For $\varsigma=\sum^n_{i=1}\varsigma_{i}\alpha_{i}\in Q$, the defining relations $(B3)$ of  $U_{r, s}(\mathfrak{so}_{2n+1})$ can be written as
\begin{eqnarray*}
\omega_{\varsigma}e_{i}\omega^{-1}_{\varsigma}=\langle\omega^{\prime}_{i},
\omega_{\varsigma}\rangle e_{i}, &\quad\quad&
\omega_{\varsigma}f_{i}\omega^{-1}_{\varsigma}=\langle\omega^{\prime}_{i}, \omega_{\varsigma}\rangle^{-1}f_{i}, \\
\omega^{\prime}_{\varsigma}e_{i}\omega^{\prime-1}_{\varsigma}=\langle\omega^{\prime}_{\varsigma},
\omega_{i}\rangle^{-1}e_{i}, &\quad\quad&
\omega^{\prime}_{\varsigma}f_{i}\omega^{\prime-1}_{\varsigma}=\langle\omega^{\prime}_{\varsigma}, \omega_{i}\rangle f_{i}.
\end{eqnarray*}
\end{upshape}
\end{tuilun}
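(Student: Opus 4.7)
The plan is to reduce to the case of a single generator by multiplicativity, and then verify the generator-level identities by direct comparison of the scalars in $(B2)$, $(B3)$ with the pairing values in the Proposition above.

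First I would use that the $\omega_j$ pairwise commute (by $(B1)$), so $\omega_\varsigma=\omega_1^{\varsigma_1}\cdots\omega_n^{\varsigma_n}$ is unambiguously defined and each factor acts on $e_i$ by a scalar drawn from $(B2)$. This immediately gives $\omega_\varsigma e_i \omega_\varsigma^{-1} = \prod_{j} c_{j,i}^{\varsigma_j}\, e_i$, where $c_{j,i}$ denotes the scalar from $(B2)$. Dually, since each $\omega_j$ is grouplike, $\Delta(\omega_j)=\omega_j\otimes\omega_j$, the skew-pairing on the Hopf subalgebras is multiplicative in the second slot, giving $\langle\omega'_i,\omega_\varsigma\rangle = \prod_j \langle\omega'_i,\omega_j\rangle^{\varsigma_j}$. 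Thus the first identity reduces to checking the generator equality $c_{j,i}=\langle\omega'_i,\omega_j\rangle$ for all $i,j$.

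This equality is a routine case check against the three cases listed in the Proposition, using $r_j=r^{(\alpha_j,\alpha_j)}=r^2$, $s_j=s^2$ for $j<n$ (where $(\alpha_j,\alpha_j)=2$) and $r_n=r$, $s_n=s$. For $j<n$ both sides read $r^{2(\varepsilon_j,\alpha_i)}s^{2(\varepsilon_{j+1},\alpha_i)}$; for $j=n$, $i<n$ both are $r^{2(\varepsilon_n,\alpha_i)}$; and for $i=j=n$ both are $rs^{-1}$. The $f_i$ companion formula follows either by the same reduction applied to the $(B2)$ relations for $f_i$, or by applying $\omega_\varsigma(\cdot)\omega_\varsigma^{-1}$ to $[e_i,f_i]=(\omega_i-\omega'_i)/(r_i-s_i)$ and noting that the right-hand side is central under this action.

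The two $\omega'_\varsigma$ formulas follow by exactly the same two-step argument, now applying multiplicativity in the first slot of the pairing, $\langle\omega'_\varsigma,\omega_i\rangle=\prod_j\langle\omega'_j,\omega_i\rangle^{\varsigma_j}$, and then matching the scalars against $(B3)$. The only bookkeeping point --- and the mildest possible ``main obstacle'' --- is that the scalars in $(B3)$ put $s$ and $r$ in swapped positions and with indices $j$, $j{+}1$ rather than $i$, $i{+}1$; but the needed identities $(\varepsilon_j,\alpha_i)=-(\varepsilon_{i+1},\alpha_j)$ and $(\varepsilon_{j+1},\alpha_i)=-(\varepsilon_i,\alpha_j)$, together with the analogous relations when $i=n$ or $j=n$, are immediate from $\alpha_k=\varepsilon_k-\varepsilon_{k+1}$ and $\alpha_n=\varepsilon_n$, and can be confirmed by inspection.
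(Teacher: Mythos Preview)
Your proposal is correct and is exactly the natural route: reduce by multiplicativity to single generators, then match the scalars from $(B2)$/$(B3)$ against the pairing values in the Proposition. The paper itself gives no proof of this corollary --- it is simply cited from \cite{BGH1} --- so there is nothing to compare against; your argument is the standard verification. One tiny wording quibble: in your alternative derivation of the $f_i$ formula you say the right-hand side of $(B4)$ is ``central under this action'', but $\omega_i-\omega'_i$ is not central in $U$; what you mean (and what suffices) is that it commutes with every $\omega_\varsigma$ by $(B1)$.
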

Corresponding to any $\lambda\in Q$ is an
algebra homomorphism $\varrho^\lambda:U_0\rightarrow \mathbb{K}$
given by $\varrho^\lambda(\omega_{j})=\langle\omega^\prime_{\lambda},~\omega_{j}\rangle $,~$\varrho^\lambda(\omega^\prime_{j})=\langle \omega^\prime_{j},~\omega_{\lambda}\rangle^{-1}$.
Associated with any algebra homomorphism $\psi: U_0\rightarrow\mathbb{K}$ is the Verma module $M(\psi)$ with highest weight $\psi$ and its unique irreducible quotient $L(\psi)$. When the
highest weight is given by the homomorphism $\varrho^\lambda$ for $\lambda\in\Lambda$, we simply write $M(\lambda)$
and $L(\lambda)$ instead of $M(\varrho^\lambda)$ and $L(\varrho^\lambda)$.~They all belong to category $\mathcal{O}$, for more details, please refer to \cite{BGH2}.

\begin{yinli}\cite{BGH2}
\begin{upshape}
Let $\upsilon_{\lambda}$ be a highest weight vector of
$M(\lambda)$ for $\lambda\in\Lambda^{+}$. Then the irreducible
module $L(\lambda)$ is  given by
\begin{displaymath}
L(\lambda)=L^\prime(\lambda)= M(\lambda)\bigg/
\Big(\sum\limits_{i=1}^{n}Uf^{(\lambda,
\alpha^{\vee}_{i})+1}_{i}\upsilon_{\lambda}\Big).
\end{displaymath}
\end{upshape}
\end{yinli}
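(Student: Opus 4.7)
Set $m_i := (\lambda, \alpha_i^\vee)$ and $v_i := f_i^{m_i+1}\upsilon_\lambda$, so the assertion is $L(\lambda) = M(\lambda)/N$ with $N := \sum_{i=1}^n U v_i$. The plan has two stages: first, show each $v_i$ is a singular vector of $M(\lambda)$, so that $N$ is a proper submodule; second, show the highest weight quotient $M(\lambda)/N$ is irreducible via an integrability argument.

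For the first stage I would verify $e_j v_i = 0$ for all $i,j$. When $i\neq j$, relation $(B4)$ gives $[e_j, f_i] = 0$, hence $e_j v_i = f_i^{m_i+1} e_j \upsilon_\lambda = 0$. When $i = j$, one needs the two-parameter $\mathfrak{sl}_2$-commutation identity
\begin{displaymath}
[e_i,\, f_i^m] \;=\; [m]_{r_i,s_i}\, f_i^{m-1}\, \frac{r_i^{m-1}\omega_i - s_i^{m-1}\omega'_i}{r_i - s_i},
\end{displaymath}
which is a routine induction on $m$ from $(B4)$. Applied to $\upsilon_\lambda$ with $m = m_i+1$, and using $\omega_i\upsilon_\lambda = \varrho^\lambda(\omega_i)\upsilon_\lambda$ and $\omega'_i\upsilon_\lambda = \varrho^\lambda(\omega'_i)\upsilon_\lambda$, the bracket $r_i^{m-1}\varrho^\lambda(\omega_i) - s_i^{m-1}\varrho^\lambda(\omega'_i)$ vanishes precisely when $m-1 = (\lambda,\alpha_i^\vee)$. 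The standing assumption $r\neq s$ keeps the relevant quantum integers $[k]_{r_i,s_i}$ nonzero for $0<k\le m_i$, so $v_i$ is indeed nonzero and is singular of weight $\lambda - (m_i+1)\alpha_i < \lambda$. Consequently $\upsilon_\lambda\notin N$ and $N\neq M(\lambda)$.

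For the second stage, write $\bar M := M(\lambda)/N$; this is a nonzero highest weight module of weight $\lambda$, hence surjects onto $L(\lambda)$. To obtain equality I would prove $\bar M$ is integrable. Local $f_i$-nilpotency holds by the very definition of $N$. Local $e_i$-nilpotency is propagated from $\bar\upsilon_\lambda$ to the rest of $\bar M$ by induction on the height of the weight, commuting $e_i$ past $f_j$ for $i\neq j$ and invoking the $(r,s)$-Serre relations $(B6)$--$(B7)$ at $j=i\pm 1$. Once integrability is established, a Weyl-type character argument in category $\mathcal{O}$ for $U_{r,s}(\mathfrak{so}_{2n+1})$ (as developed in \cite{BGH2}) forces $\mathrm{ch}\,\bar M = \mathrm{ch}\,L(\lambda)$, which gives the identification.

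The main obstacle is this final integrability-plus-character step. In the two-parameter setting the Serre relation at the short simple root $\alpha_n$ is asymmetric and cubic in $e_n$, so propagating $e_n$-local-nilpotency through all weight spaces requires a careful inductive manipulation rather than the clean quantum-group calculus available in the one-parameter case; and the Weyl-invariance of the character of an integrable highest weight module is not immediate from the generators and relations but is part of the category $\mathcal{O}$ package for $U_{r,s}(\mathfrak{so}_{2n+1})$. This is exactly the input condensed into the citation of \cite{BGH2}.
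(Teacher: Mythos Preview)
The paper does not prove this lemma: it is quoted from \cite{BGH2} (note the citation attached to the statement) and used as a black box, so there is no in-paper argument to compare your proposal against.

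Your outline is the standard route and is correct in outline. Two small remarks. First, the commutation identity you quote disagrees in the sign of the exponents with the version the paper itself invokes later (from Lemma~2.6 of \cite{BGH2}), which has $r_i^{-(m-1)}\omega_i - s_i^{-(m-1)}\omega'_i$ in the numerator; this does not affect the strategy, only the bookkeeping in the vanishing check. Second, your assertion that ``local $f_i$-nilpotency holds by the very definition of $N$'' is literal only for the image of $\upsilon_\lambda$; extending it to all of $\bar M = U^-\bar\upsilon_\lambda$ requires the same Serre-relation propagation you flag for the $e_i$'s, since one must commute high powers of $f_i$ through arbitrary monomials in the $f_j$. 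That propagation (for both $e_i$ and $f_i$) is exactly the content of the lemma the paper records immediately after this one, again citing \cite{BGH2}.
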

Let $W$ be the Weyl group of the root system $\Phi$, and $\sigma_i\in W$ the reflection associated to $\alpha_i$ for each $1\leq i\leq n$. Thus,
$\sigma_{i}(\lambda)=\lambda-(\lambda, \alpha_{i})\alpha_{i},\
\lambda\in\Lambda$.~
By \cite{BGH2}, $L(\lambda)$ is a finite-dimensional $U$-module on which $U_{0}$ acts semi-simply.
$L(\lambda)=\bigoplus\limits_{\mu\leq\lambda} L(\lambda)_{\mu}$,
where $\varrho^\mu:U_{0}\rightarrow\mathbb{K}$ is an algebra homomorphism, and
$L(\lambda)_{\mu}=\{x\in
L(\lambda)\mid\omega_{i}.x=\varrho^{\mu}(\omega_{i})x,~ \omega^{\prime}_{i}.x=\varrho^{\mu}(\omega^{\prime}_{i})x,~ 1\leq
 i\leq n\}.$

\begin{yinli}\cite{BGH2}\label{BGH1Lemma215}
$(a)$ The elements $e_i$, $f_i(1\leq i \leq n)$ act
locally nilpotently on $U_{r,s}(\mathfrak{so}_{2n+1})$-module $L^\prime(\lambda)$.

$(b)$ Assume that $rs^{-1}$ is not a root of unity, $V=\bigoplus_{j\in \mathbb{Z}^+}V_{\lambda-j\alpha}\in Ob(\mathcal{O})$ is a $U_{r,s}(\mathfrak{sl}_2)$-module for some $\lambda\in\Lambda$. If $e$, $f$ act locally nilpotently on $V$, then $\dim_{\mathbb{K}}V<\infty$,
         and the weights of $V$ are preserved
under the simple reflection taking $\alpha$ to $-\alpha$.
\end{yinli}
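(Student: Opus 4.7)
The plan is to handle the two parts separately, each by a two-parameter adaptation of the standard category-$\mathcal{O}$ argument; part~(a) rests on showing that the locally nilpotent locus of each $e_i$ (resp.\ $f_i$) is a submodule of $L'(\lambda)$, while part~(b) reduces to a rank-one Verma-type analysis powered by the hypothesis that $rs^{-1}$ is not a root of unity.

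For part~(a), fix $i$ and define
\[
N_i^e=\{v\in L'(\lambda)\mid e_i^N v=0\ \text{for some}\ N\},\qquad N_i^f=\{v\in L'(\lambda)\mid f_i^N v=0\ \text{for some}\ N\}.
\]
By construction $v_\lambda\in N_i^e$ (since $e_iv_\lambda=0$) and $v_\lambda\in N_i^f$ (by the defining relation of $L'(\lambda)$). Because $L'(\lambda)=U\cdot v_\lambda$, it suffices to check stability under left multiplication by each algebra generator. Stability under $\omega_j^{\pm1}$ and $\omega_j^{\prime\pm1}$ is immediate from $(B2)$--$(B3)$, as these act diagonally on weight vectors. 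Stability under $e_j$ for $j\neq i$ uses the $(r,s)$-Serre relations $(B5)$--$(B6)$ to rewrite $e_i^N e_j$ as $e_j$ times a polynomial in lower powers of $e_i$, plus correction terms that already vanish after enough applications. Stability under $f_j$ is the essential point: the case $j\neq i$ is trivial from $(B4)$, while the case $j=i$ is handled by iterating a commutator of the form
\[
[e_i,f_i^m]=f_i^{m-1}\,K_{i,m},\qquad K_{i,m}\in U_0,
\]
which expresses $e_i^N(f_iv)$ through $e_i^{N-1}v,\dots,v$ with Cartan coefficients. The argument with the roles of $e$ and $f$ interchanged then handles $N_i^f$.

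For part~(b), any nonzero $v\in V_\lambda$ is a highest weight vector since the weights of $V$ are bounded above by $\lambda$, so $ev=0$. By the triangular decomposition the cyclic submodule $M=U_{r,s}(\mathfrak{sl}_2)\cdot v$ is spanned by $\{f^kv\}_{k\geq 0}$, and local nilpotency of $f$ on $v$ makes this span finite-dimensional; let $N$ be minimal with $f^{N+1}v=0$. Applying $e$ to $f^{N+1}v$ and using the one-variable version of the commutator identity above yields a relation of the form $[N+1]_{r,s}\,Q(\lambda)\,f^Nv=0$, where $Q(\lambda)$ is a nonzero element of $U_0$ acting on $v$ by a specific ratio of $r$-- and $s$--powers. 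Because $rs^{-1}$ is not a root of unity, the quantum bracket $[N+1]_{r,s}$ can vanish only when $N=(\lambda,\alpha^\vee)$, so this equality is forced. Consequently the weights of $M$ are $\{\lambda-j\alpha : 0\leq j\leq(\lambda,\alpha^\vee)\}$, visibly invariant under the reflection $\lambda\mapsto\lambda-(\lambda,\alpha^\vee)\alpha$. Since $V\in\mathrm{Ob}(\mathcal{O})$ is finitely generated and every highest weight component of a finite generating set produces such a finite-dimensional $M$, one concludes $\dim_{\mathbb{K}}V<\infty$ and that the full weight set of $V$ is reflection-invariant.

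The main obstacle is the stability verification in part~(a): tracking how powers of $e_i$ (and of $f_i$) pass through the cubic $(r,s)$-Serre relations in $(B6)$--$(B7)$ involving the short root $\alpha_n$ requires careful bookkeeping that is not present in the simply-laced or one-parameter settings. Once the commutators are organized into a uniform shape $[e_i^N,X]=\sum_k X_k\,e_i^{N-k}$ with $X_k$ controlled by elements of $U_0$, the rest of both parts follows by the outline above, with the non-root-of-unity condition on $rs^{-1}$ playing precisely the role of ensuring that all relevant quantum integers $[m]_{r,s}$ are nonzero.
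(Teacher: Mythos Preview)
The paper does not supply its own proof of this lemma; it is quoted verbatim from \cite{BGH2} (Lemma~2.15 there) and used as a black box in the proof of the next theorem. So there is no in-paper argument to compare your proposal against. Your outline is the standard category-$\mathcal{O}$ argument (the two-parameter analogue of the proof in Jantzen's book for $U_q(\mathfrak g)$), and it is essentially what \cite{BGH2} does.

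Two points in your sketch deserve tightening. First, in part~(a) the local nilpotency of the $e_i$ is much cheaper than you indicate: since $L'(\lambda)\in\mathcal{O}$, every weight of $L'(\lambda)$ lies below $\lambda$, and $e_i$ raises weight by $\alpha_i$, so $e_i^N$ kills any weight vector once $N$ is large enough; no submodule argument or Serre bookkeeping is needed for the $e_i$'s. The submodule argument is only required for $N_i^f$, and there the displayed commutator $[e_i,f_i^m]=f_i^{m-1}K_{i,m}$ is indeed the relevant one (it shows $N_i^f$ is stable under $e_i$), but your prose attaches it to the wrong stability check. Second, in part~(b) you tacitly assume $V_\lambda\neq 0$ when you pick $v\in V_\lambda$; the hypothesis only says the weights of $V$ lie in $\{\lambda-j\alpha:j\ge 0\}$. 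You should instead take $v$ in the highest \emph{nonzero} weight space $V_\mu$, and then the vanishing of $e f^{N+1}v$ together with the non-root-of-unity hypothesis forces $N=(\mu,\alpha^\vee)$, after which the reflection symmetry of the weights of the cyclic piece follows as you wrote. With these two corrections your argument goes through.
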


\begin{dingli}
If $\lambda\in\Lambda^{+}$,~then~
$\dim L(\lambda)_{\mu}=\dim L(\lambda)_{\sigma(\mu)}$,
~$\forall\mu\in\Lambda,~\sigma\in W$.
\end{dingli}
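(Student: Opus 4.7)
The plan is to reduce the statement to the case of a simple reflection. Since $W$ is generated by $\sigma_1,\dots,\sigma_n$, induction on the length of $\sigma\in W$ shows it suffices to prove, for every simple reflection $\sigma_i$ and every $\mu\in\Lambda$, that
\begin{equation*}
\dim L(\lambda)_\mu \;=\; \dim L(\lambda)_{\sigma_i(\mu)}.
\end{equation*}

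Fix $i$ and $\mu$. I would let $U_i\subset U$ be the subalgebra generated by $e_i,\,f_i,\,\omega_i^{\pm 1},\,\omega^{\prime\pm 1}_i$; the relations $(B1)$--$(B4)$ show that $U_i$ is a homomorphic image of $U_{r_i,s_i}(\mathfrak{sl}_2)$. Form the $\alpha_i$-string through $\mu$,
\begin{equation*}
V_\mu \;:=\; \bigoplus_{k\in\mathbb Z} L(\lambda)_{\mu+k\alpha_i},
\end{equation*}
which is a $U_i$-submodule of $L(\lambda)$ since $e_i$ (resp.\ $f_i$) shifts weights by $+\alpha_i$ (resp.\ $-\alpha_i$) while the $\omega$-generators preserve weight spaces. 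Because every weight of $L(\lambda)$ is $\leq\lambda$, there is a largest $k_0\in\mathbb Z$ with $L(\lambda)_{\mu+k_0\alpha_i}\neq 0$; setting $\nu:=\mu+k_0\alpha_i$, one can rewrite $V_\mu=\bigoplus_{j\in\mathbb Z^+}L(\lambda)_{\nu-j\alpha_i}$, which matches the shape of the hypothesis in Lemma \ref{BGH1Lemma215}(b).

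I would then invoke Lemma \ref{BGH1Lemma215}. By part $(a)$, $e_i,\,f_i$ act locally nilpotently on $L(\lambda)$, hence on $V_\mu$. Part $(b)$, applied to the $U_i$-module $V_\mu$ with top weight $\nu$ and simple root $\alpha_i$, gives $\dim_{\mathbb K}V_\mu<\infty$ together with invariance of the weight decomposition of $V_\mu$ under the reflection sending $\alpha_i$ to $-\alpha_i$, i.e.\ under $\sigma_i$. Comparing the $\mu$- and $\sigma_i(\mu)$-graded components of $V_\mu$ then yields $\dim L(\lambda)_\mu=\dim L(\lambda)_{\sigma_i(\mu)}$, and induction on Weyl-group length finishes the proof.

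The principal obstacle is that Lemma \ref{BGH1Lemma215}(b), read literally, only records the set of weights; to upgrade this to equality of multiplicities I would invoke the rank-one structure theory (valid because $rs^{-1}$ is assumed not to be a root of unity), decomposing the finite-dimensional $U_i$-module $V_\mu$ into a direct sum of highest-weight irreducible $U_i$-modules, each of which is a single $\sigma_i$-symmetric chain with weight multiplicities equal to one. Summing the $\sigma_i$-symmetries of the constituents produces the $\sigma_i$-symmetry of $V_\mu$, completing the argument.
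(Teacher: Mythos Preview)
Your proof is correct and follows essentially the same approach as the paper: reduce to a simple reflection $\sigma_i$, pass to the rank-one subalgebra $U_i$ generated by $e_i,f_i,\omega_i,\omega'_i$, and invoke Lemma~\ref{BGH1Lemma215}. The only cosmetic difference is that the paper works with the $U_i$-submodule $U_iL(\lambda)_\mu$ generated by a single weight space and concludes via the double inequality $\dim L(\lambda)_\mu\geq\dim L(\lambda)_{\sigma_i(\mu)}\geq\dim L(\lambda)_\mu$, whereas you take the full $\alpha_i$-string and read off the multiplicity equality directly from the rank-one decomposition.
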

\begin{proof}[\upshape\KAI Proof.]
$L(\lambda)= L^{\prime}(\lambda)$,~$U_{i}$ a subalgebra generated by
$e_{i}, f_{i},\omega_{i}, \omega^{\prime}_{i}$,
$\mu$ is a weight of $L^{\prime}(\lambda)$,
$
L^{\prime}_{i}(\mu)=U_{i}L^{\prime}(\lambda)_{\mu}
=\sum\limits_{j\in
\mathbb{Z}^{+}}L^{\prime}_{i}{(\mu)}_{\lambda^{\prime}-j\alpha_{i}},
$
where $\lambda^{\prime}\leq \lambda$. By \cite{BGH2} Lemma 2.15,
simple reflection $\sigma_{i}$ preserves weights of $L^{\prime}_{i}(\mu)$,
$\sigma_{i}(\mu)$ is weight of both $L^{\prime}_{i}(\mu)$ and $L^{\prime}(\lambda)$.

Since
$\dim L^{\prime}(\lambda)_{\mu}\geq \dim
L^{\prime}(\lambda)_{\sigma_{i}(\mu)}\geq \dim
L^{\prime}(\lambda)_{\mu}$,
~$\dim L^{\prime}(\lambda)_{\mu}=\dim
L^{\prime}(\lambda)_{\sigma_{i}(\mu)}$.
So $\dim
L^{\prime}(\lambda)_{\mu}= \dim L^{\prime}(\lambda)_{\sigma(\mu)}$.
\end{proof}

\begin{dingyi}\cite{BGH1}
\begin{upshape}\kai
Bilinear form $\langle~,~\rangle_{U}: U\times U \rightarrow
\mathbb{K}$ defined by
\begin{eqnarray*}
\langle F_{a}\omega^\prime_{\mu}\omega_{\upsilon}E_{\beta},
F_{\theta}\omega^\prime_{\sigma}\omega_{\delta}E_{\gamma}\rangle_{U}
=\langle \omega^\prime_{\sigma}, \omega_{\upsilon}\rangle
\langle\omega^\prime_{\mu}, \omega_{\delta}\rangle \langle
F_{\theta}, E_{\beta}\rangle \langle S^2(F_{a}), E_{\gamma}\rangle.
\end{eqnarray*}This form is also called the Rosso form of the two-parameter quantum group $U_{r, s}(\mathfrak{g})$.
\end{upshape}
\end{dingyi}

\begin{dingli}\cite{BGH1}
\begin{upshape} The Rosso form on $U_{r, s}(\mathfrak{g})\times U_{r, s}(\mathfrak{g})$
$\langle~,~\rangle_{U}$ is $\mathrm{ad_{l}}$-invariant, that is,
$\langle \mathrm{ad_{l}}(a)b, c\rangle_{U}=\langle b,
\mathrm{ad_{l}}(S(a))c\rangle_{U}$ for $a, b, c\in U_{r,
s}(\mathfrak{g})$.
\end{upshape}
\end{dingli}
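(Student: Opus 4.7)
The plan is to reduce the ad-invariance identity to a check on the algebra generators of $U$, exploiting that $\mathrm{ad}_l$ is an algebra homomorphism while $S$ is an anti-homomorphism. Set
\[
I=\{a\in U\mid \langle \mathrm{ad}_l(a)b,c\rangle_U=\langle b,\mathrm{ad}_l(S(a))c\rangle_U\text{ for all }b,c\in U\}.
\]
If $a_1,a_2\in I$, then two applications of the defining identity yield
\[
\langle \mathrm{ad}_l(a_1a_2)b,c\rangle_U=\langle \mathrm{ad}_l(a_2)b,\mathrm{ad}_l(S(a_1))c\rangle_U=\langle b,\mathrm{ad}_l(S(a_1a_2))c\rangle_U,
\]
so $I$ is a subalgebra. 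It therefore suffices to verify the identity for $a\in\{\omega_i^{\pm1},(\omega'_i)^{\pm1},e_i,f_i\}$.

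For the toral generators, $\mathrm{ad}_l(\omega_i)b=\omega_ib\omega_i^{-1}$ and $\mathrm{ad}_l(S(\omega_i))c=\omega_i^{-1}c\omega_i$, so the invariance amounts to the Rosso form being preserved by simultaneous conjugation of both slots by $U_0$. Writing $b,c$ in the PBW form $F_\bullet\omega'_\bullet\omega_\bullet E_\bullet$, this is immediate from the torus block $\langle \omega'_\sigma,\omega_\upsilon\rangle\langle \omega'_\mu,\omega_\delta\rangle$ in the definition and the commutation rules $(B2)$--$(B3)$ recorded in the preceding Corollary.

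The substantive step is $a=e_i$ (and symmetrically $a=f_i$). Using $\Delta(e_i)=e_i\otimes 1+\omega_i\otimes e_i$ and $S(e_i)=-\omega_i^{-1}e_i$, one gets
\[
\mathrm{ad}_l(e_i)b=e_ib-\omega_ib\omega_i^{-1}e_i,
\]
and an analogous expression for $\mathrm{ad}_l(S(e_i))c$ from the twisted coproduct of $S(e_i)$. Substituting the PBW expansions and using that the Rosso form factors as a torus block together with two pairings $\langle F_\theta,E_\beta\rangle$ and $\langle S^2(F_a),E_\gamma\rangle$, both sides become sums of products of scalar weight-pairings with either $\langle F_\bullet, E_\bullet e_i\rangle$- or $\langle F_\bullet\cdot f_i, E_\bullet\rangle$-type factors. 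These then reassemble into the other side via the skew-Hopf compatibility of the pairing established in the preceding Proposition,
\[
\langle f,ee'\rangle=\langle f_{(1)},e\rangle\langle f_{(2)},e'\rangle,\qquad \langle ff',e\rangle=\langle f,e_{(2)}\rangle\langle f',e_{(1)}\rangle,
\]
combined with the diagonal pairing $\langle f_i,e_j\rangle=\delta_{ij}/(s_i-r_i)$.

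The principal obstacle is bookkeeping: the $S^2$-twist on $F_a$ in the definition of $\langle~,~\rangle_U$ interacts with the weight-shift by $\omega_i$ that appears when $e_i$ is commuted past the torus piece of $b$, and the two shifts must cancel exactly against the corresponding pieces on the other side. The cleanest conceptual route is to regard $U_{r,s}(\mathfrak{so}_{2n+1})$ as the quantum double of $\mathcal{B}$ with $(\mathcal{B}')^{\mathrm{cop}}$ determined by the skew-Hopf pairing of the preceding Proposition; then $\langle~,~\rangle_U$ is the canonical ad-invariant form on that Drinfeld double, and ad-invariance is a formal consequence of the Hopf-pairing axioms, independent of the particular Serre relations $(B5)$--$(B7)$.
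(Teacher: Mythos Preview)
The paper does not give its own proof of this theorem: the statement is quoted verbatim from \cite{BGH1} and used as a black box. So there is no in-paper argument to compare against.

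As to your proposal itself: the reduction to generators via the subalgebra $I$ is correct and standard, and the toral case is genuinely immediate. However, for $a=e_i$ and $a=f_i$ you do not actually carry out the verification---you name the ingredients (skew-Hopf compatibility of the pairing, the $S^2$-twist, the weight shifts) and then concede that the ``principal obstacle is bookkeeping'' without doing it. That is a sketch, not a proof. Your final paragraph, appealing to the Drinfel'd double realization of $U_{r,s}(\mathfrak{g})$ from the skew pairing of $\mathcal{B}$ and $\mathcal{B}'$, is in fact the route taken in \cite{BGH1}: there the Rosso form is constructed so that $\mathrm{ad}_l$-invariance is a formal consequence of the Hopf-pairing axioms, and no generator-by-generator check is needed. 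If you want to present a self-contained argument, either complete the $e_i$ computation explicitly (tracking the $S^2$ factor against the $\omega_i$-conjugation on both slots) or state the double construction precisely and invoke the general invariance of the canonical form on a quantum double; as written, the middle of your proposal is neither.
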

By Theorem 2.14 (\cite{{BGH1}}),
\begin{displaymath}
\langle a, b\rangle_{U}=0,\quad a\in U^{-\sigma}_{r, s}(\mathfrak{n}^-),\quad
b\in U^{\delta}_{r, s}(\mathfrak{n}),\quad \sigma, \delta\in Q^+,
\quad\sigma\neq\delta.
\end{displaymath}
\begin{dingli}[\cite{BGH1}]\label{BGH1Th2.14}
\begin{upshape}
For $\beta\in Q^+$, the skew pairing $\langle~,~\rangle$ on
$\mathcal{B}^{\prime-\beta}\times\mathcal{B}^{\beta}$ is nondegenerate.
\end{upshape}
\end{dingli}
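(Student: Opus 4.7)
The plan is to prove nondegeneracy by induction on the height $\mathrm{ht}(\beta)$ where $\beta=\sum_{i=1}^n\varsigma_i\alpha_i\in Q^+$. Because the pairing vanishes between graded components of different weight, it suffices to treat each piece $\mathcal{B}^{\prime-\beta}\times\mathcal{B}^\beta$ separately. The base case $\mathrm{ht}(\beta)=0$ reduces to the pairing on the Cartan subalgebras generated by $\omega^{\pm1}_i$ and $\omega^{\prime\pm1}_j$, and the explicit values of the preceding proposition together with the standing assumption $r\neq s$ and $r^2+s^2\neq 1$ exhibit a nondegenerate matrix (a generalized Cartan-type determinant computation).

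For the inductive step I would use the defining skew-Hopf identities
\begin{equation*}
\langle fg,\,a\rangle=\langle f\otimes g,\,\Delta(a)\rangle,\qquad \langle f,\,ab\rangle=\langle\Delta(f),\,b\otimes a\rangle,
\end{equation*}
together with $\Delta(e_i)=e_i\otimes 1+\omega_i\otimes e_i$. Fix $e\in\mathcal{B}^\beta$ in the right radical, i.e.\ $\langle f,e\rangle=0$ for every $f\in\mathcal{B}^{\prime-\beta}$. Decomposing $\Delta(e)$ along the triangular decomposition of $U$ isolates ``left partial derivatives'' ${}_i\partial(e)\in\mathcal{B}^{\beta-\alpha_i}$ defined by the component of $\Delta(e)$ of the form $\omega_{\beta-\alpha_i}\cdot{}_i\partial(e)\otimes e_i$. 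Applying the skew-Hopf identity with $f=f_if'$ for $f'\in\mathcal{B}^{\prime-(\beta-\alpha_i)}$, and using $\langle f_i,e_j\rangle=\delta_{ij}/(s_i-r_i)$ together with the Cartan pairing already handled in the base case, the condition $\langle f,e\rangle=0$ collapses to $\langle f',\,{}_i\partial(e)\rangle=0$ for every such $f'$ and every index $i$. The inductive hypothesis then forces ${}_i\partial(e)=0$ for each $i$.

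The central obstacle is to conclude that a homogeneous element of $\mathcal{B}^\beta$ of positive height annihilated by every left partial derivative must vanish. This is the two-parameter $B_n$-analogue of Lusztig's theorem identifying the radical of the canonical bilinear form on the tensor algebra on $e_1,\dots,e_n$ with the two-sided ideal generated by the Serre relations. My plan is to build an auxiliary bilinear form on the free $\mathbb{K}$-algebra in $e_1,\dots,e_n$ by extending the generator pairings multiplicatively via the Hopf identities, then verify by direct but lengthy calculation that every Serre element appearing in $(B5)$ and $(B6)$ pairs to zero against an arbitrary word in the $f_j$; the two-parameter twist makes this computation noticeably more delicate than in the one-parameter setting, particularly for the length-$3$ relation involving $e_n$ and $e_{n-1}$, where the scalar $(r_n^{-2}+r_n^{-1}s_n^{-1}+s_n^{-2})$ must be tracked carefully. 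The PBW-type basis from \cite{BGH1} then ensures these are the only relations defining $U^+_{r,s}(\mathfrak{so}_{2n+1})$, so the form descends to a nondegenerate form on the quotient; an entirely symmetric argument disposes of the left radical, completing the proof.
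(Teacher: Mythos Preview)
The paper does not supply a proof of this theorem: it is quoted verbatim from \cite{BGH1} (Theorem 2.14 there) and carries only the citation, with no argument given in the present paper. There is therefore nothing in this paper to compare your attempt against.

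That said, your outline follows the standard route used in \cite{BGH1} and in the one-parameter literature (Lusztig, Jantzen): reduce via the skew-Hopf identities to showing that a homogeneous element of positive degree annihilated by all the Kashiwara-type partial derivatives ${}_i\partial$ must vanish, and identify the radical of the lifted form on the free algebra with the Serre ideal. One small inaccuracy: your base case is misstated. For $\beta=0$ the graded pieces $\mathcal{B}^{\prime\,0}$ and $\mathcal{B}^{0}$ in the sense of this theorem are the scalars (the theorem concerns the nilpotent parts $U^{-\beta}(\mathfrak{n}^-)\times U^{\beta}(\mathfrak{n}^+)$, as its use in the subsequent lemma makes clear), so the height-zero case is trivial and the Cartan determinant computation you describe is not needed here. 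The genuinely hard step, as you correctly flag, is the two-parameter Serre-ideal identification; in \cite{BGH1} this is handled, and you would need to reproduce or cite that computation rather than merely announce it.
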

\begin{yinli}
\begin{upshape}
If $\mu, \mu_{1}, \nu, \nu_{1}\in Q^+$,~then
\begin{displaymath}
\langle U^{-\nu}(\mathfrak{n}^-)U_{0}U^{\mu}(\mathfrak{n}^+),
U^{-\nu_{1}}(\mathfrak{n}^-)U_{0}U^{\mu_{1}}
(\mathfrak{n}^+)\rangle_{U}=0
\Longleftrightarrow\mu=\nu_{1}, \nu=\mu_{1}.
\end{displaymath}
\end{upshape}
\end{yinli}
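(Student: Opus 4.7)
The plan is to unpack the definition of the Rosso form on a general pair of PBW-type basis elements and read off when each of the four scalar factors can fail to vanish. A typical element of $U^{-\nu}(\mathfrak{n}^-)U_0U^{\mu}(\mathfrak{n}^+)$ has the shape $F_a\omega'_{\mu'}\omega_{\upsilon}E_{\beta}$ with $F_a\in U^{-\nu}(\mathfrak{n}^-)$ and $E_\beta\in U^{\mu}(\mathfrak{n}^+)$, so $a=\nu$ and $\beta=\mu$; similarly for the right-hand slot we have $\theta=\nu_1$ and $\gamma=\mu_1$. By the defining formula
\[
\langle F_a\omega'_{\mu'}\omega_{\upsilon}E_\beta,\;F_\theta\omega'_{\sigma}\omega_{\delta}E_\gamma\rangle_{U}=\langle\omega'_\sigma,\omega_\upsilon\rangle\langle\omega'_{\mu'},\omega_\delta\rangle\langle F_\theta,E_\beta\rangle\langle S^2(F_a),E_\gamma\rangle,
\]
the whole pairing vanishes as soon as one of $\langle F_\theta,E_\beta\rangle$ or $\langle S^2(F_a),E_\gamma\rangle$ vanishes identically.

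Next I would invoke Theorem \ref{BGH1Th2.14}: the skew pairing $\langle~,~\rangle$ on $\mathcal{B}'{}^{-\sigma}\times\mathcal{B}^\delta$ vanishes unless $\sigma=\delta$, and is nondegenerate when $\sigma=\delta$. Applied to $\langle F_\theta,E_\beta\rangle$ with $F_\theta\in\mathcal{B}'{}^{-\nu_1}$ and $E_\beta\in\mathcal{B}^{\mu}$, this forces $\nu_1=\mu$. Since $S^2$ preserves the weight grading on $U^{\pm}$ (it acts as conjugation by a product of $\omega_i,\omega'_i$), $S^2(F_a)\in U^{-\nu}(\mathfrak{n}^-)$, so $\langle S^2(F_a),E_\gamma\rangle$ with $E_\gamma\in U^{\mu_1}(\mathfrak{n}^+)$ forces $\nu=\mu_1$. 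This is the ``only if'' direction.

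For the ``if'' direction (under the natural reading, that the two subspaces then pair nontrivially), I would fix $\mu=\nu_1$ and $\nu=\mu_1$ and exhibit a pair of elements on which the form is nonzero. By Theorem \ref{BGH1Th2.14} there exist $F_\theta\in\mathcal{B}'{}^{-\mu}$ and $E_\beta\in\mathcal{B}^\mu$ with $\langle F_\theta,E_\beta\rangle\neq 0$, and similarly $F_a\in U^{-\nu}(\mathfrak{n}^-)$ and $E_\gamma\in U^\nu(\mathfrak{n}^+)$ with $\langle S^2(F_a),E_\gamma\rangle\neq 0$ (using that $S^2$ is an automorphism of $U^-(\mathfrak{n}^-)$ preserving weight spaces, so it transports a dual basis of $U^{-\nu}$ to another basis). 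Choosing the $U_0$-parts so that $\langle\omega'_\sigma,\omega_\upsilon\rangle$ and $\langle\omega'_{\mu'},\omega_\delta\rangle$ are nonzero (these skew pairings on $U_0$ only take values that are products of nonzero powers of $r,s$, so any choice works) produces a pair with nonzero Rosso form.

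The one step that requires genuine care is the weight-preservation of $S^2$ on $U^-(\mathfrak{n}^-)$: using the formulas $S(f_i)=-f_i\omega'_i{}^{-1}$ and $S(\omega'_i)=\omega'_i{}^{-1}$, a direct computation shows $S^2(f_i)=\omega'_i{}^{-1}f_i\omega'_i$, which by $(B3)$ is a nonzero scalar multiple of $f_i$. Consequently $S^2$ restricts to a grading-preserving algebra automorphism of $U^-(\mathfrak{n}^-)$, and the nondegeneracy argument above goes through verbatim. Apart from this observation, the proof is essentially bookkeeping with the explicit formula for $\langle~,~\rangle_U$.
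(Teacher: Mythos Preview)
Your argument is correct and is exactly the reasoning the paper implicitly relies on; the lemma is stated there without proof, immediately after the remark that $\langle a,b\rangle_U=0$ whenever $a\in U^{-\sigma}(\mathfrak{n}^-)$, $b\in U^{\delta}(\mathfrak{n}^+)$ with $\sigma\neq\delta$, and after Theorem~\ref{BGH1Th2.14} (nondegeneracy on $\mathcal{B}'^{-\beta}\times\mathcal{B}^{\beta}$). Those two facts, plugged into the four-factor formula for $\langle~,~\rangle_U$, are precisely what you use.

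Two minor remarks. First, you are right that the displayed biconditional is misstated in the paper: as written it asserts that the pairing vanishes exactly when the weights \emph{match}, whereas the intended statement (and the one actually used in the very next lemma on nondegeneracy of the Rosso form) is the opposite, namely that the pairing between the two graded pieces is nonzero if and only if $\mu=\nu_1$ and $\nu=\mu_1$. Your ``natural reading'' is the correct one. Second, a small slip: $S^2(f_i)=S(-f_i\omega_i'^{-1})=\omega_i' f_i\omega_i'^{-1}$, not $\omega_i'^{-1}f_i\omega_i'$; either way it is a nonzero scalar multiple of $f_i$ by $(B3)$, so your conclusion that $S^2$ restricts to a grading-preserving automorphism of $U^-(\mathfrak{n}^-)$ stands.
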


Define a group homomorphism $\chi_{\eta,
\phi}: Q\times Q\rightarrow\mathbb{K}^\times$ as follows
$$
\chi_{\eta,\phi}(\eta_{1},\phi_{1})=\langle\omega^\prime_{\eta},
\omega_{\phi_{1}}\rangle\langle\omega^\prime_{\eta_{1}},
\omega_{\phi}\rangle,$$
where $(\eta, \phi)\in Q\times Q$,~$(\eta_{1},\phi_{1})\in Q\times Q$,~$\mathbb{K}^\times=\mathbb{K}\backslash\{0\}$.

\begin{yinli}\label{chracter}
\begin{upshape}
Suppose $r^ks^{l}=1$ if and only if $k=l=0$. If $\chi_{\eta,
\phi}=\chi_{\eta^\prime, \phi^\prime}$, then $(\eta,
\phi)=(\eta^\prime, \phi^\prime)$.
\end{upshape}
\end{yinli}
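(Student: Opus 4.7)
The plan is to exploit the bimultiplicativity of the pairing $(\eta,\gamma)\mapsto\langle\omega'_\eta,\omega_\gamma\rangle$ on $Q\times Q$---which comes from the fact that the $\omega_j$ and $\omega'_i$ are grouplike and commute pairwise---in order to reduce the equality of characters $\chi_{\eta,\phi}=\chi_{\eta',\phi'}$ to a linear system over $\mathbb{Z}$ whose coefficient matrix turns out to be upper triangular with nonzero diagonal.

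First I would separate the two slots. Substituting $\eta_1=0$ into $\chi_{\eta,\phi}(\eta_1,\phi_1)=\chi_{\eta',\phi'}(\eta_1,\phi_1)$, and using $\omega'_0=1$ together with $\langle 1,\omega_{\phi}\rangle=\varepsilon(\omega_\phi)=1$, collapses the identity to $\langle\omega'_\eta,\omega_{\phi_1}\rangle=\langle\omega'_{\eta'},\omega_{\phi_1}\rangle$ for every $\phi_1\in Q$; the symmetric choice $\phi_1=0$ yields $\langle\omega'_{\eta_1},\omega_\phi\rangle=\langle\omega'_{\eta_1},\omega_{\phi'}\rangle$ for every $\eta_1\in Q$. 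Setting $\delta:=\eta-\eta'$, $\gamma:=\phi-\phi'$ and invoking bimultiplicativity on grouplike elements, these rewrite as
$$
\langle\omega'_\delta,\omega_{\phi_1}\rangle=1\quad\text{and}\quad\langle\omega'_{\eta_1},\omega_\gamma\rangle=1,\qquad\phi_1,\eta_1\in Q.
$$
By symmetry it suffices to derive $\gamma=0$ from the second identity.

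To do this, I would write $\gamma=\sum_{j=1}^n\gamma_j\alpha_j$ and specialize $\eta_1=\alpha_i$ for each $i\in\{1,\ldots,n\}$. Proposition~2.2 furnishes $\langle\omega'_{\alpha_i},\omega_{\alpha_j}\rangle=r^{a_{ij}}s^{b_{ij}}$ with $a_{ij},b_{ij}\in\mathbb{Z}$, and a short calculation from $\alpha_i=\varepsilon_i-\varepsilon_{i+1}$ (for $i<n$) and $\alpha_n=\varepsilon_n$, together with $(\varepsilon_k,\alpha_i)\in\{0,\pm 1\}$, shows that $a_{ii}=2$ for $i<n$, $a_{nn}=1$, $a_{i,i+1}=-2$ for $i<n$, and $a_{ij}=0$ for all other pairs. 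Thus $(a_{ij})$ is upper triangular with determinant $\pm\,2^{n-1}\neq 0$. The identity $\prod_j\langle\omega'_{\alpha_i},\omega_{\alpha_j}\rangle^{\gamma_j}=1$ reads $r^{\sum_j a_{ij}\gamma_j}\,s^{\sum_j b_{ij}\gamma_j}=1$, so the assumption that $r^ks^l=1$ forces $k=l=0$ gives $\sum_j a_{ij}\gamma_j=0$ for every $i$. Back-substituting from $i=n$ down to $i=1$ yields $\gamma_n=\cdots=\gamma_1=0$, and applying exactly the same argument (via the other identity, using the matrix $(a_{ji})$) to $\delta$ completes the proof.

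The only step requiring care is the explicit shape of the exponent matrix $(a_{ij})$; but this is a direct bookkeeping exercise from the formulas in Proposition~2.2, so no substantial obstacle is anticipated.
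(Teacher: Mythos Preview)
Your argument is correct and follows the same overall strategy as the paper: set one slot to zero to decouple $\eta$ from $\phi$, plug in the simple roots, and use the hypothesis $r^ks^l=1\Rightarrow k=l=0$ to force a linear system on the coordinates of $\eta-\eta'$ (resp.\ $\phi-\phi'$) to have only the trivial solution. The only cosmetic difference is the choice of coordinates: the paper records the $r$-exponent of $\langle\omega'_\zeta,\omega_i\rangle$ as $2(\varepsilon_i,\zeta)$ (and the $s$-exponent as $2(\varepsilon_{i+1},\zeta)$), so the relevant coordinates of $\eta-\eta'$ in the orthonormal $\varepsilon$-basis are read off immediately without any matrix inversion, whereas you work in the $\alpha$-basis and must observe that the exponent matrix $(a_{ij})$ is upper triangular with nonzero diagonal. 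Both routes are equally short; the paper's $\varepsilon$-basis version avoids the back-substitution step, while your formulation makes the linear-algebra content explicit and would transfer verbatim to any situation where the pairing matrix is merely nonsingular rather than diagonal in some basis.
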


\begin{proof}[\upshape\KAI Proof:]
\begin{upshape}
Let $\zeta=\sum\limits_{i=1}^{n}\zeta_i\alpha_i$.  By definition,
$$
\langle\omega^{'}_{\zeta},\omega_i\rangle=\left\{
       \begin{array}{rl}
          r^{2(\varepsilon_i,\zeta)}s^{2(\varepsilon_{i+1},\zeta)}, & i<n,\\[2pt]
          r^{2(\varepsilon_n,\zeta)}(rs)^{-\zeta_i},& i=n.
       \end{array}
    \right.
$$
$$
\langle\omega^{'}_i,\omega_{\zeta}\rangle^{-1}=\left\{
       \begin{array}{rl}
          r^{2(\varepsilon_{i+1},\zeta)}s^{2(\varepsilon_{i},\zeta)}, & i<n,\\[2pt]
          s^{2(\varepsilon_n,\zeta)}(rs)^{-\zeta_i}, & i=n.
       \end{array}
    \right.
$$
It is easy to see that the conclusion is obvious for case $i<n$. For case $i=n$,
\begin{eqnarray*}
\lefteqn{\chi_{\eta,\phi}(0,\alpha_i)=\langle\omega^{'}_{\eta},\omega_i\rangle
=r^{2(\varepsilon_i,\eta)}s^{2(\varepsilon_{i+1},\eta)}}
\hspace{1cm}\\
&=&\chi_{\eta^{'},\phi^{'}}(0,\alpha_i)=r^{2(\varepsilon_i,\eta^{'})}s^{2(\varepsilon_{i+1},\eta^{'})}.
\end{eqnarray*}
By comparing the exponent of $r$,~$s$, we get that
$(\varepsilon_i,\eta-\eta^{'})=0,~(\varepsilon_{i+1},\eta-\eta^{'})=0$. So, $\eta=\eta^{'}$, and
\begin{eqnarray*}
\lefteqn{\chi_{\eta,\phi}(\alpha_i,0)=\langle\omega^{'}_{i},\omega_{\phi}\rangle
=r^{-2(\varepsilon_{i+1},\phi)}s^{-2(\varepsilon_{i},\phi)}}
\hspace{1cm}\\
&=&\chi_{\eta^{'},\phi^{'}}(\alpha_i,0)=r^{-2(\varepsilon_{i+1},\phi^{'})}s^{-2(\varepsilon_{i},\phi^{'})}.
\end{eqnarray*}
By comparing  the exponent of $r$,~$s$, we have
$(\varepsilon_i,\phi-\phi^{'})=0,~(\varepsilon_{i+1},
\phi-\phi^{'})=0$, i.e., $\phi=\phi^{'}$.
\end{upshape}
\end{proof}

\begin{yinli}
\begin{upshape}
If $r^{k}s^{l}=1\Longleftrightarrow k=l=0$. Then Rosso form
$\langle~,~\rangle_{U}$ is nondegenerate on $U$.
\end{upshape}
\end{yinli}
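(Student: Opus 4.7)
The plan is to reduce nondegeneracy of the Rosso form on all of $U$ to three separate nondegeneracy statements that are already available: Theorem~\ref{BGH1Th2.14} for the $\pm$-parts, Lemma~\ref{chracter} for the Cartan part, and the weight-block orthogonality lemma immediately preceding this one. First, use the triangular decomposition
\[
U \;=\; \bigoplus_{\nu,\mu\in Q^{+}} U^{-\nu}(\mathfrak{n}^{-})\,U_{0}\,U^{\mu}(\mathfrak{n}^{+})
\]
together with the preceding lemma to observe that the Rosso form pairs the block indexed by $(\nu,\mu)$ only with the ``flipped'' block indexed by $(\mu,\nu)$; all other cross pairings vanish. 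Hence if $x=\sum x_{\nu,\mu}$ lies in the left radical of $\langle\,,\,\rangle_{U}$, each $x_{\nu,\mu}$ is already annihilated by the single block $U^{-\mu}(\mathfrak{n}^{-})\,U_{0}\,U^{\nu}(\mathfrak{n}^{+})$, and it suffices to prove nondegeneracy of the induced pairing on each fixed pair of flipped blocks.

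On such a block, expand an arbitrary element $x$ in a basis $\{F_{a}\omega'_{\eta}\omega_{\phi}E_{b}\}$ built from PBW bases of $U^{\pm}(\mathfrak{n}^{\pm})$ and the monomial basis $\{\omega'_{\eta}\omega_{\phi}\mid(\eta,\phi)\in Q\times Q\}$ of $U_{0}$, and similarly the test vector $y=F_{\theta}\omega'_{\sigma}\omega_{\delta}E_{\gamma}$ from the flipped block. The defining formula of the Rosso form together with the definition of $\chi_{\eta,\phi}$ yield
\[
\langle x, y\rangle_{U}\;=\;\sum_{a,b}\langle F_{\theta},E_{b}\rangle\,\langle S^{2}(F_{a}),E_{\gamma}\rangle\sum_{\eta,\phi}c_{ab\eta\phi}\,\chi_{\eta,\phi}(\sigma,\delta).
\]
Theorem~\ref{BGH1Th2.14} makes the first two inner factors into nondegenerate pairings $U^{-\mu}\times U^{\mu}\to\mathbb{K}$ and $U^{-\nu}\times U^{\nu}\to\mathbb{K}$; the $S^{2}$-twist is harmless because $S^{2}$ preserves the $Q$-grading on $U^{-}$ and restricts to a nonzero rescaling on each generator $f_{i}$, so $\{S^{2}(F_{a})\}$ is again a basis of $U^{-\nu}(\mathfrak{n}^{-})$. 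Choosing $\theta$ and $\gamma$ dually to $b$ and $a$ under these two pairings then isolates, for each fixed $(a,b)$, the relation
\[
\sum_{\eta,\phi}c_{ab\eta\phi}\,\chi_{\eta,\phi}(\sigma,\delta) \;=\; 0\qquad\text{for all }(\sigma,\delta)\in Q\times Q.
\]
By Lemma~\ref{chracter} the characters $\chi_{\eta,\phi}$ are pairwise distinct, hence linearly independent by Artin's theorem on independence of characters, so every $c_{ab\eta\phi}$ vanishes and $x=0$.

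The only real bookkeeping is in the second step: one must verify that the Rosso form factorization really does split tensorially into the three ingredients once elements are written in the straightened basis $F_{a}\omega'_{\eta}\omega_{\phi}E_{b}$ and its flipped counterpart, and that $S^{2}$ acts on $U^{-\nu}(\mathfrak{n}^{-})$ by rescaling PBW monomials (which follows from $S^{2}(f_{i})=\omega'_{i}f_{i}{\omega'_{i}}^{-1}$ together with the commutation relations $(B3)$). Both verifications are routine; the substance of the proof is the combination of the three nondegeneracy inputs.
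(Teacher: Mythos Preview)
Your proof is correct and follows essentially the same route as the paper's. The paper also reduces to a single flipped block by the orthogonality lemma, picks dual bases of $U^{\pm\mu}$, $U^{\pm\nu}$ via Theorem~\ref{BGH1Th2.14}, computes explicitly that $S^{2}$ scales $U^{-\nu}(\mathfrak{n}^{-})$ by $(rs^{-1})^{2(\rho,\nu)}$, and then finishes with Lemma~\ref{chracter} together with linear independence of characters (which the paper attributes to Dedekind rather than Artin).
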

\begin{proof}[\upshape\KAI Proof:]
\begin{upshape}
We only have to prove that
$u\in U^{-\nu}(\mathfrak{n}^-)U_{0}U^{\mu}(\mathfrak{n}^+)$, $
\langle u, v\rangle_{U}=0$ for all
$v\in U^{-\mu}(\mathfrak{n}^-)U_{0}U^{\nu}(\mathfrak{n}^+)$, then $u=0$.
Let $\mu\in Q^+$, $\{u^\mu_{1},~u^\mu_{2},\cdots
u^\mu_{d_{\mu}}\}$ is a basis of $U^{\mu}(\mathfrak{n}^+)$, $\dim
U^{\mu}(\mathfrak{n}^+)=d_{\mu}$. By Theorem~\ref{BGH1Th2.14}, we can take a dual basis of
$U^{-\mu}(\mathfrak{n}^-)$ as $\{v^\mu_{1}, v^\mu_{2}, \cdots
v^\mu_{d_{\mu}}\}$, that is, $\langle v^\mu_{i},
u^\mu_{j}\rangle=\delta_{ij}$. Set
$\{v^\nu_{i}\omega^\prime_{\eta}\omega_{\phi}u^\mu_{j}\big|1\leq
i\leq d_{\nu},~1\leq j\leq d_{\mu}\}$
is a basis of $ U^{-\nu}(\mathfrak{n}^-)U_0U^{\mu}(\mathfrak{n}^+)$. By definition of the Rosso form,
\begin{eqnarray*}
\langle v^\nu_{i}\omega^\prime_{\eta}\omega_{\phi}u^\mu_{j},
v^\mu_{k}\omega^\prime_{\eta_{1}}\omega_{\phi_{1}}u^\nu_{l}\rangle_{U}
&=&\langle\omega^\prime_{\eta},
\omega_{\phi_{1}}\rangle\langle\omega^\prime_{\eta_{1}},
\omega_{\phi}\rangle \langle v^\mu_{k}, u^\mu_{j}\rangle\langle
S^2(v^\nu_{i}), u^\nu_{l}\rangle\\
&=&\delta_{kj}\delta_{il}(rs^{-1})^{2(\rho,
\nu)}\langle\omega^\prime_{\eta},
\omega_{\phi_{1}}\rangle\langle\omega^\prime_{\eta_{1}},
\omega_{\phi}\rangle.
\end{eqnarray*}
Let $u=\sum\limits_{i, j, \eta, \phi}\theta_{i, j, \eta,
\phi}v^\nu_{i}\omega^\prime_{\eta}\omega_{\phi}u^\mu_{j},~
v=v^\mu_{k}\omega^\prime_{\eta_{1}}\omega_{\phi_{1}}u^\nu_{l},~
1\leq k\leq d_{\mu},~1\leq l\leq d_{\nu},~\eta_{1},
\phi_{1}\in Q,~\rho$ is a half sum of positive roots.~Because of $\langle u , v
\rangle_{U}=0$,~we have
\begin{equation}
\sum\limits_{\eta, \phi}\theta_{l, k, \eta, \phi}(rs^{-1})^{2(\rho,
\nu)}\langle\omega^\prime_{\eta},
\omega_{\phi_{1}}\rangle\langle\omega^\prime_{\eta_{1}},
\omega_{\phi}\rangle =0.
\end{equation}
This identity also can reformulate as $\sum\limits_{\eta, \phi}\theta_{l, k, \eta,
\phi}(rs^{-1})^{2(\rho, \nu)}\chi_{\eta, \phi}=0. $~
By Dedekind Theorem (please ref to \cite{{BAI}}),~$\theta_{l, k, \eta,
\phi}=0.~$So $u=0$.
\end{upshape}
\end{proof}

\section{Harish-Chandra homomorphism}
We suppose $r^ks^l=1$ if and only if $k=l=0$ from now on. Denote by $Z(U)$ the center of $U_{r, s}(\mathfrak{so}_{2n+1})$.
It is easy to see that $Z(U)\subset
U^{0}$. We also define an algebra automorphism $\gamma^{-\rho}: U_{0}\rightarrow U_{0}$ as
\begin{displaymath}
\gamma^{-\rho}(\omega_{\eta}^\prime\omega_{\phi})
=\varrho^{-\rho}(\omega_{\eta}^\prime\omega_{\phi})\omega_{\eta}^\prime\omega_{\phi}.
\end{displaymath}

\begin{dingyi}
\begin{upshape}
Harish-Chandra homomorphism $\xi: Z(U)\rightarrow U_{0}$ is the restricted map $\gamma^{-\rho}\pi\big|_{Z(U)}$,~
\begin{displaymath}
\gamma^{-\rho}\pi: U^{0}\rightarrow U_{0}\rightarrow U_{0},
\end{displaymath}
where $\pi : U^{0}\rightarrow U_{0}$ is the canonical projection.
\end{upshape}
\end{dingyi}

\begin{dingli}
\begin{upshape}
When $n$ is even, $\xi: Z(U)\rightarrow U_{0}$ is injective for $U_{r, s}(\mathfrak{so}_{2n+1})$.
\end{upshape}
\end{dingli}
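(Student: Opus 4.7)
The plan is to assume $z \in Z(U)$ with $\xi(z) = 0$ and derive $z = 0$ by evaluating $z$ on Verma modules and extracting the vanishing from the resulting PBW identities via a character-separation argument, with the parity of $n$ entering only at the final step. Since $\gamma^{-\rho}$ is an algebra automorphism of $U_0$, the hypothesis $\xi(z) = 0$ is equivalent to $\pi(z) = 0$, i.e.\ the $U_0$-summand of $z$ in the decomposition
\[ U^0 \;=\; U_0 \,\oplus\, \bigoplus_{\mu \in Q^+,\, \mu \neq 0} U^{-\mu}(\mathfrak{n}^-)\, U_0\, U^\mu(\mathfrak{n}^+) \]
vanishes. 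Using the non-degenerate skew pairing of Theorem~\ref{BGH1Th2.14} to fix dual PBW bases $\{v^\mu_i\}$ and $\{u^\mu_j\}$ of $U^{-\mu}(\mathfrak{n}^-)$ and $U^\mu(\mathfrak{n}^+)$, I would write $z = \sum_{\mu \neq 0,\, i,j,\eta,\phi} c_{\mu,i,j,\eta,\phi}\, v^\mu_i\, \omega'_\eta \omega_\phi\, u^\mu_j$.

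Next I would evaluate $z$ on a highest-weight vector $v_\lambda$ of the Verma module $M(\lambda)$, $\lambda \in \Lambda$. Each $u^\mu_j$ with $\mu \neq 0$ raises the weight and therefore kills $v_\lambda$, so $z \cdot v_\lambda = 0$. Centrality of $z$ together with cyclicity of $M(\lambda)$ then forces $z$ to act as zero on the whole of $M(\lambda)$. Rewriting $z$ in a full PBW basis $z = \sum_{I,K,\eta,\phi} a_{I,\eta,\phi,K}\, F_I\, \omega'_\eta \omega_\phi\, E_K$ and recomputing $z \cdot v_\lambda$, only the terms with $K = 0$ contribute, and the linear independence of $\{F_I v_\lambda\}_I$ in $M(\lambda)$ yields, for each $I$, the identity
\[ \sum_{\eta,\phi} a_{I,\eta,\phi,0}\, \varrho^\lambda(\omega'_\eta \omega_\phi) \;=\; 0, \qquad \lambda \in \Lambda. \]

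The crux is a character-separation argument in the spirit of Lemma~\ref{chracter}: under the generic hypothesis $r^k s^l = 1 \iff k = l = 0$, Dedekind's independence of characters reduces the problem to showing that the family $\{\varrho^\lambda\}_{\lambda \in \Lambda}$ separates the group elements $\omega'_\eta \omega_\phi$ for $(\eta,\phi) \in Q \times Q$. The contribution from the spin fundamental weight $\varpi_n = \tfrac{1}{2}(\varepsilon_1 + \cdots + \varepsilon_n)$ produces half-integer exponents of $r$ and $s$ in $\varrho^\lambda$; a direct inspection shows these combine to distinguish all such pairs exactly when $n$ is even, while for $n$ odd a $\mathbb{Z}/2$-ambiguity survives and separation (hence injectivity of $\xi$) fails, in accord with the dichotomy mentioned in the introduction. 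Once the $K = 0$ coefficients have been shown to vanish, the remaining PBW components of $z$ are removed either by iterating the above reasoning after using centrality to commute $z$ past the lowering generators $f_i$, or by directly invoking the $\mathrm{ad}$-invariance and nondegeneracy of the Rosso form. The main obstacle I expect is making this character-separation precise: one has to compute explicitly how pairing $\varpi_n$ against arbitrary $\omega'_\eta \omega_\phi$ produces $r,s$-exponents whose integrality pattern depends on the parity of $n$, and verify in detail that this is the exact point at which injectivity can fail, paralleling the type-$A$ phenomenon observed by Benkart--Kang--Lee.
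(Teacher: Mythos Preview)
Your plan has a genuine structural gap: the step you present as the main computation is vacuous. Since $z\in U^0$, in any PBW expansion $z=\sum a_{I,\eta,\phi,K}\,F_I\,\omega'_\eta\omega_\phi\,E_K$ one has $a_{I,\eta,\phi,K}=0$ unless $F_I$ and $E_K$ have the same weight; in particular $K=0$ forces $I=0$, and those coefficients are already zero by the hypothesis $\pi(z)=0$. Thus ``recomputing $z\cdot v_\lambda$'' in the PBW basis yields only the identity $0=0$, and the character-separation argument at that stage has nothing to separate. All of the actual content is deferred to your last sentence about ``iterating after commuting $z$ past $f_i$'' or ``invoking the Rosso form'', neither of which you set up; that is precisely where the difficulty lives.

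The paper's argument is organized quite differently and avoids this trap. One picks $\nu\in Q^+\setminus\{0\}$ \emph{minimal} with $z_\nu\neq 0$, writes $z_\nu=\sum_{k,l} y_k\,t_{k,l}\,x_l$ with $y_k\in U^{-\nu}(\mathfrak{n}^-)$, $x_l\in U^{\nu}(\mathfrak{n}^+)$, $t_{k,l}\in U_0$, and uses centrality in the form $[e_i,z]=0$. Minimality of $\nu$ guarantees that the $U^{-(\nu-\alpha_i)}(\mathfrak{n}^-)\,U_0\,U^{\nu}(\mathfrak{n}^+)$-component of $[e_i,z]$ comes \emph{only} from $\sum_{k,l}[e_i,y_k]\,t_{k,l}\,x_l$, so this sum vanishes; hence, for each fixed $l$, the vector $m=\sum_k y_k\,t_{k,l}\,v_\lambda$ in the \emph{irreducible} module $L(\lambda)$ is killed by every $e_i$ and must be zero. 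Choosing $\lambda\in\Lambda^+$ large enough that $U^{-\nu}(\mathfrak{n}^-)\to L(\lambda)$, $y\mapsto yv_\lambda$, is injective then gives $\varrho^\lambda(t_{k,l})=0$ for all such $\lambda$. Only at this final point does the character-separation claim $\varrho^\lambda(\omega'_\eta\omega_\phi)=1\Rightarrow(\eta,\phi)=(0,0)$ enter, and it is exactly here (via the half-integral pairing with $\varpi_n$) that the parity of $n$ matters. Your intuition about where $n$ even is needed is correct, but the mechanism producing the nontrivial equations $\varrho^\lambda(t_{k,l})=0$ is the minimal-$\nu$/singular-vector argument, which is missing from your outline.
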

\begin{proof}[\upshape\KAI Proof:]
\begin{upshape}
Note that $U^{0}=U_{0}\bigoplus K$,~where
$K=\bigoplus\limits_{\nu>0}U^{-\nu}(\mathfrak{n}^-)U_{0}U^{+\nu}(\mathfrak{n}^+)$,~$K$~
is the two-sided ideal in $U^{0}$ which is the kernel of $\pi$. As both $\pi$ and
$\gamma^{-\rho}$ are algebra homomorphism, $\xi$ is an algebra homomorphism. Assume that $z\in
Z(U)$ and $\xi(z)=0$. Writing $z=\sum\limits_{\nu\in Q^+}z_{\nu}$ with $z_{\nu}\in U^{-\nu}(\mathfrak{n}^-)U_{0}U^{+\nu}(\mathfrak{n}^+)$, we have $z_{0}=0$.
Fix any $z_{\nu}\neq 0$ minimal with the property that $\nu\in Q ^+\backslash 0 $. Also choose bases $\{y_{k}\}$ and $\{x_{l}\}$ for  $U^{-\nu}(\mathfrak{n}^-)$ and
$U^{+\nu}(\mathfrak{n}^+)$, respectively. We may write
$z_{\nu}=\sum\limits_{k , l}y_{k}t_{k , l}x_{l},~t_{k, l}\in
U_{0}$. Then
\begin{align*}
0&=e_{i}z-ze_{i}\\
 &=\sum\limits_{\gamma\neq\nu}(e_{i}z_{\gamma}-z_{\gamma}e_{i})+\sum\limits_{k, l}(e_{i}y_{k}-y_{k}e_{i})t_{k, l}x_{l}
 +\sum\limits_{k, l}y_{k}(e_{i}t_{k, l}x_{l}-t_{k, l}x_{l}e_{i}).
\end{align*}
Note that $e_{i}y_{k}-y_{k}e_{i}\in U^{-(\nu-\alpha_{i})}(\eta^-)U_{0}$,
and only
\begin{displaymath}
\sum\limits_{k, l}(e_{i}y_{k}-y_{k}e_{i})t_{k, l}x_{l}\in
U^{-(\nu-\alpha_{i})}(\mathfrak{n}^-)U_{0}U^{+\nu}(\mathfrak{n}^+).
\end{displaymath}
Therefore, we have
\begin{displaymath}
\sum\limits_{k, l}(e_{i}y_{k}-y_{k}e_{i})t_{k, l}x_{l}=0.
\end{displaymath}
By the triangular decomposition of $U$ and the fact that $\{x_{l}\}$ is a basis of $U^{+\nu}(\mathfrak{n}^+)$,~
we get $\sum\limits_{k}e_{i}y_{k}t_{k, l}=
\sum\limits_{k}y_{k}e_{i}t_{k, l}$ for each $l$ and $i$. Now we fix $l$ and consider the irreducible module $L(\lambda)$ for
$\lambda\in\Lambda^+$.
Let $v_{\lambda}$ be the
highest weight vector of $L(\lambda)$, and set $m=\sum\limits_{k}y_{k}t_{k,
l}v_{\lambda}$.~Then for each $i$,~
\begin{displaymath}
e_{i}m=\sum\limits_{k}e_{i}y_{k}t_{k,
l}v_{\lambda}=\sum\limits_{k}y_{k}e_{i}t_{k, l}v_{\lambda}=0.
\end{displaymath}
Hence $m$ generates a proper submodule of $L(\lambda)$, which is  contradict  with the irreducibility of $L(\lambda)$,~so
\begin{displaymath}
m=\sum\limits_{k}y_{k}\varrho^{\lambda}(t_{k, l})v_{\lambda}=0.
\end{displaymath}
$\nu=k_{1}\alpha_{1}+k_{2}\alpha_{2}+\cdots+k_{n}\alpha_{n}\in Q^+$,
chose suitable
$\lambda=\lambda_{1}\varpi_{1}+\lambda_{2}\varpi_{2}+\cdots+\lambda_{n}\varpi_{n}$~ such that
$(\lambda,\alpha^{\vee}_{i})=\lambda_{i}\geq k_{i}$.
We might assume $\lambda_{i}>0$. By
\cite{{BGH2}} Theorem 2.12,~
$\sum\limits_{k}y_{k}\varrho^{\lambda}(t_{k, l})\in U^{-\nu}(\mathfrak{n}^-)
\mapsto\sum\limits_{k}y_{k}\varrho^{\lambda}(t_{k, l})v_{\lambda}$~
is injective,~so $\sum\limits_{k}y_{k}\varrho^{\lambda}(t_{k, l})=0$. As $\{y_{k}\}$ is a basis of $U^{-\nu}(\mathfrak{n}^-)$,~
$\varrho^{\lambda}(t_{k, l})=0$. Let $t_{k,l}=\sum_{\eta,\phi}
a_{\eta,\phi}\omega_{\eta}^{'}\omega_{\phi}$. We can claim: when $n$ is even, $\varrho^{\lambda}(t_{k,
l})=0$,~$\forall\lambda$,~then $t_{k,l}=0$. This claim also amounts to
\begin{equation}
\varrho^{\lambda}({\omega_{\eta}^{'}\omega_{\phi}})=\varrho^{\lambda}({\omega_{\zeta}^{'}\omega_{\psi}})
\Longleftrightarrow \eta=\zeta,~\phi=\psi.
\end{equation}
which is also equivalent to
\begin{equation}
\varrho^{\lambda}({\omega_{\eta}^{'}\omega_{\phi}})=1
\Longleftrightarrow \eta=0,~\phi=0.
\end{equation}
This can be proved by induction.
\end{upshape}
\end{proof}

\section{The image of $Z(U)$ under the Harish-Chandra homomorphism $\xi$}
Define an algebra homomorphism $\varrho^{0,
\lambda}:U_{0}\rightarrow\mathbb{K}$ for
$\lambda\in\Lambda$ by
$\varrho^{0, \lambda}
(\omega^\prime_{\eta}\omega_{\phi})=(rs^{-1})^{(\eta+\phi,
\lambda)}$.
And define an algebra homomorphism
$\varrho^{\lambda, \mu}:U^0\rightarrow\mathbb{K}$ for $\lambda, \mu\in\Lambda$ by
$
\varrho^{\lambda, \mu}(\omega^\prime_{\eta}\omega_{\phi})=
\varrho^{\lambda, 0}(\omega^\prime_{\eta}\omega_{\phi})\varrho^{0,
\mu}(\omega^\prime_{\eta}\omega_{\phi})
=\varrho^{\lambda}(\omega^\prime_{\eta}\omega_{\phi})\varrho^{0,
\mu}(\omega^\prime_{\eta}\omega_{\phi}).
$

\begin{yinli}\label{lemma:diffCharacter}
\begin{upshape}
Let $u=\omega^\prime_{\eta}\omega_{\phi},~\eta,~
\phi\in Q$ for arbitrary two-parameter quantum group of type $B_{n}$.~If $\varrho^{\lambda, \mu}(u)=1$ for all $\lambda, \mu\in\Lambda$,~ then $u=1$.
\end{upshape}
\end{yinli}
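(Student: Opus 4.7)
The plan is to decouple the two free parameters $\lambda, \mu \in \Lambda$ in the hypothesis: use $\mu$ to pin down $\phi$ in terms of $\eta$, then use $\lambda$ to force $\eta$ to vanish. First I would factor
\begin{equation*}
\varrho^{\lambda,\mu}(\omega^\prime_\eta\omega_\phi) = \varrho^\lambda(\omega^\prime_\eta\omega_\phi)\,(rs^{-1})^{(\eta+\phi,\mu)}
\end{equation*}
and specialize to $\lambda = 0$, so that $\varrho^\lambda$ is trivial. The hypothesis reduces to $(rs^{-1})^{(\eta+\phi,\mu)} = 1$ for every $\mu \in \Lambda$. The standing assumption $r^ks^l=1\Leftrightarrow k=l=0$ means that $rs^{-1}$ has infinite multiplicative order, and since $\Lambda$ contains a basis of $E$, we conclude $(\eta+\phi,\mu) = 0$ for every $\mu$, hence $\phi = -\eta$.

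With $\phi = -\eta$, setting $\mu = 0$ and using the formulas $\varrho^\lambda(\omega_j) = \langle \omega^\prime_\lambda, \omega_j\rangle$ and $\varrho^\lambda(\omega^\prime_j) = \langle \omega^\prime_j, \omega_\lambda\rangle^{-1}$ from the corollary after the skew-pairing proposition, the hypothesis reduces to
\begin{equation*}
\Psi(\lambda,\eta) := \langle \omega^\prime_\eta,\omega_\lambda\rangle \langle \omega^\prime_\lambda,\omega_\eta\rangle = 1 \quad \text{for every } \lambda \in \Lambda.
\end{equation*}
The skew pairing is bi-multiplicative, and hence so is $\Psi$ in $\lambda$ and in $\eta$ separately; consequently $\Psi$ is determined by its values on pairs of simple roots.

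The key computational step is the uniform identity
\begin{equation*}
\Psi(\alpha_i, \alpha_j) = (rs^{-1})^{2(\alpha_i, \alpha_j)},
\end{equation*}
which I would verify by direct substitution into the explicit values of the skew pairing, splitting into the cases (i) $i, j < n$, (ii) exactly one of $i, j$ equals $n$, and (iii) $i = j = n$. In each case both sides are explicit monomials in $r, s$ that one checks match. By bi-multiplicativity, $\Psi(\lambda, \eta) = (rs^{-1})^{2(\lambda,\eta)}$ for all $\lambda, \eta \in Q$. Restricting the condition of paragraph two to $\lambda \in Q \subset \Lambda$, the infinite order of $rs^{-1}$ forces $(\lambda, \eta) = 0$ for every $\lambda \in Q$; since $Q$ spans $E$ over $\mathbb{R}$, we get $\eta = 0$. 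Together with $\phi = -\eta$, this yields $u = \omega^\prime_0\omega_0 = 1$.

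The main obstacle is the bookkeeping in verifying $\Psi(\alpha_i, \alpha_j) = (rs^{-1})^{2(\alpha_i, \alpha_j)}$: since $\langle \omega^\prime_n, \omega_n\rangle = rs^{-1}$ has a different form from the generic $\langle \omega^\prime_i, \omega_j\rangle = r^{2(\varepsilon_j,\alpha_i)}s^{2(\varepsilon_{j+1},\alpha_i)}$ for $i < n$, the cases involving the index $n$ must be treated separately. Nevertheless, each case reduces to a short monomial computation, and the uniformity of the final identity reflects the fact that the ``symmetrized'' pairing $\Psi$ collapses the asymmetry between $r$ and $s$ that is present in the individual skew pairings.
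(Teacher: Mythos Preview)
Your proposal is correct and follows the same two-step strategy as the paper: first specialize $\lambda=0$ and vary $\mu$ to force $\phi=-\eta$, then specialize $\mu=0$ and vary $\lambda$ to force $\eta=0$. The only difference is cosmetic: in the second step the paper evaluates directly at the fundamental weights $\varpi_i$ and asserts the single-line identity $\varrho^{\varpi_i,0}(\omega'_\eta\omega_{-\eta})=(s^2r^{-2})^{(\alpha_i,\varpi_i)\eta_i}$, whereas you stay inside $Q$, establish the symmetrized pairing formula $\Psi(\lambda,\eta)=(rs^{-1})^{2(\lambda,\eta)}$ by checking it on pairs of simple roots, and then conclude; your route is slightly longer but more self-contained, since it avoids having to interpret $\omega'_{\varpi_i}$, $\omega_{\varpi_i}$ for a weight not in $Q$.
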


\begin{proof}[\upshape\KAI Proof:]
Denote ~$\eta=\eta_1\alpha_1+\eta_2\alpha_2+\cdots+\eta_n\alpha_n$,~$\phi=\phi_1\alpha_1+\phi_2\alpha_2+\cdots+\phi_n\alpha_n$,
$$\varrho^{0,\varpi_i}(\omega_{\eta}^{'}\omega_{\phi})=(rs^{-1})^{(\eta+\phi,\varpi_i)}=1\Longrightarrow \eta_i+\phi_i=0,\ \forall i,$$
$$\varrho^{\varpi_i,0}(\omega_{\eta}^{'}\omega_{-\eta})=(s^2r^{-2})^{(\alpha_i,\omega_i)\eta_i}=1\Longrightarrow \eta_i=0,\ \forall i.$$

This completes the proof.
\end{proof}

\begin{tuilun}
\begin{upshape}
If $u\in U_{0}, \varrho^{\lambda, \mu}(u)=0$ for all $(\lambda, \mu)\in \Lambda\times\Lambda$,~then $u=0$.
\end{upshape}
\end{tuilun}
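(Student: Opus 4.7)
The plan is to deduce the corollary from Lemma~\ref{lemma:diffCharacter} by means of Dedekind's theorem on linear independence of characters, applied to the abelian group $\Lambda\times\Lambda$. First, I would fix a basis of $U_0$ consisting of the monomials $\omega'_\eta\omega_\phi$ with $(\eta,\phi)\in Q\times Q$ and write any $u\in U_0$ as a \emph{finite} sum $u=\sum_{(\eta,\phi)\in S}a_{\eta,\phi}\,\omega'_\eta\omega_\phi$ with $a_{\eta,\phi}\in\mathbb K^\times$ and $S\subset Q\times Q$ finite. The hypothesis then reads
\begin{equation*}
\sum_{(\eta,\phi)\in S}a_{\eta,\phi}\,\varrho^{\lambda,\mu}(\omega'_\eta\omega_\phi)=0\qquad\text{for all }(\lambda,\mu)\in\Lambda\times\Lambda.
\end{equation*}

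The key observation is that, for each fixed $(\eta,\phi)$, the map
\begin{equation*}
\chi_{\eta,\phi}:\Lambda\times\Lambda\longrightarrow\mathbb K^\times,\qquad(\lambda,\mu)\longmapsto\varrho^{\lambda,\mu}(\omega'_\eta\omega_\phi),
\end{equation*}
is a group homomorphism (i.e.\ a character of $\Lambda\times\Lambda$ with values in $\mathbb K^\times$), since $\varrho^{\lambda,\mu}$ is multiplicative in $\omega'_\eta\omega_\phi$ and the exponents in the definition of $\varrho^{\lambda,0}$ and $\varrho^{0,\mu}$ are bilinear in their arguments. Hence the identity above says that a finite $\mathbb K$-linear combination of characters of $\Lambda\times\Lambda$ vanishes.

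The main step is to verify that the characters $\chi_{\eta,\phi}$ corresponding to the distinct indices $(\eta,\phi)\in S$ are pairwise distinct. This is precisely what Lemma~\ref{lemma:diffCharacter} provides: if $\chi_{\eta,\phi}=\chi_{\eta',\phi'}$, then $\chi_{\eta-\eta',\phi-\phi'}\equiv 1$, i.e.\ $\varrho^{\lambda,\mu}(\omega'_{\eta-\eta'}\omega_{\phi-\phi'})=1$ for every $(\lambda,\mu)$, which forces $\eta=\eta'$ and $\phi=\phi'$.

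Once distinctness is established, Dedekind's theorem on linear independence of characters of a group (the same result already invoked at the end of Section~2) immediately yields $a_{\eta,\phi}=0$ for every $(\eta,\phi)\in S$, and therefore $u=0$. I do not anticipate any genuine obstacle: the entire argument amounts to upgrading Lemma~\ref{lemma:diffCharacter} from monomials to arbitrary elements of $U_0$ via character independence, and the only mildly subtle point is checking that $(\lambda,\mu)\mapsto\varrho^{\lambda,\mu}(\omega'_\eta\omega_\phi)$ is indeed a group homomorphism on $\Lambda\times\Lambda$, which follows directly from the formula $\varrho^{\lambda,\mu}(\omega'_\eta\omega_\phi)=(rs^{-1})^{(\eta+\phi,\mu)}\varrho^{\lambda}(\omega'_\eta\omega_\phi)$ together with the bilinearity of $(\,,\,)$.
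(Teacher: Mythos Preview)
Your proposal is correct and follows essentially the same approach as the paper: view each $(\lambda,\mu)\mapsto\varrho^{\lambda,\mu}(\omega'_\eta\omega_\phi)$ as a character of $\Lambda\times\Lambda$, invoke Lemma~\ref{lemma:diffCharacter} to see that distinct $(\eta,\phi)$ give distinct characters, and then apply linear independence of characters (Dedekind) to the finite expansion of $u$. The only cosmetic remark is that the paper already reserves the symbol $\chi_{\eta,\phi}$ for a character on $Q\times Q$ in Section~2, so you may wish to choose a different name to avoid a clash.
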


\begin{proof}[\upshape\KAI Proof:]
\begin{upshape}
$
(\lambda, \mu)\mapsto\varrho^{\lambda,
\mu}(\omega^\prime_{\eta}\omega_{\phi}),~(\eta, \phi)\in
Q\times Q
$
is the character over group $\Lambda\times\Lambda$. It follows from  Lemma ~\ref{lemma:diffCharacter} that different
$(\eta,\phi)$ give rise to different characters. Suppose now that $u=\sum\theta_{\eta,
\phi}\omega^\prime_{\eta}\omega_{\phi}$, where $\theta_{\eta,
\phi}\in\mathbb{K}$. By assumption,
$
\sum\theta_{\eta, \phi}\varrho^{\lambda,
\mu}(\omega^\prime_{\eta}\omega_{\phi})=0
$
for all $(\lambda,\mu)\in\Lambda\times\Lambda$.~
By the linear independence of different characters,~$\theta_{\eta, \phi}=0,~u=0$.
\end{upshape}
\end{proof}

Let $U^0_{\flat}=\bigoplus\limits_{\eta\in
Q}\mathbb{K}\omega^\prime_{\eta}\omega_{-\eta}$, define the action of Weyl group on $U^0_{\flat}$ by
$
\sigma(\omega^\prime_{\eta}\omega_{-\eta})=
\omega^\prime_{\sigma(\eta)}\omega_{-\sigma(\eta)}$,~
for all $\sigma\in W, \eta\in Q$.

\begin{dingli}\label{importantIdentity}
\begin{upshape}
We have $\varrho^{\sigma(\lambda),
\mu}(u)=\varrho^{\lambda, \mu}(\sigma^{-1}(u))$ for arbitrary two-parameter quantum group of type $B_{n}$,~where $u\in
U^0_{\flat}, \sigma\in W$ and $\lambda, \mu\in\Lambda$.
\end{upshape}
\end{dingli}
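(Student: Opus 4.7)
The plan is to reduce the identity to a straightforward pairing computation plus the $W$-invariance of the bilinear form $(\,,\,)$ on $E$. First, for $u=\omega'_\eta\omega_{-\eta}\in U^0_{\flat}$ the $\mu$-dependent factor in $\varrho^{\lambda,\mu}$ is automatically trivial, since
\[\varrho^{0,\mu}(\omega'_\eta\omega_{-\eta})=(rs^{-1})^{(\eta+(-\eta),\mu)}=1.\]
Hence $\varrho^{\lambda,\mu}(u)=\varrho^{\lambda}(u)$, and likewise with $\sigma(\lambda)$ in place of $\lambda$. Observing also that $\sigma^{-1}(u)=\omega'_{\sigma^{-1}(\eta)}\omega_{-\sigma^{-1}(\eta)}\in U^0_{\flat}$, the desired identity collapses to the $\mu$-free statement
\[\varrho^{\sigma(\lambda)}(\omega'_\eta\omega_{-\eta})=\varrho^{\lambda}(\omega'_{\sigma^{-1}(\eta)}\omega_{-\sigma^{-1}(\eta)}).\]

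Next I compute both sides explicitly. Using $\varrho^{\lambda}(\omega_j)=\langle\omega'_{\lambda},\omega_j\rangle$ and $\varrho^{\lambda}(\omega'_j)=\langle\omega'_j,\omega_{\lambda}\rangle^{-1}$ extended multiplicatively to all of $U_0$, one finds
\[\varrho^{\lambda}(\omega'_\eta\omega_{-\eta})=\langle\omega'_\eta,\omega_{\lambda}\rangle^{-1}\langle\omega'_{\lambda},\omega_\eta\rangle^{-1}.\]
The whole question therefore concerns the \emph{symmetrized} pairing
\[q(\eta,\phi):=\langle\omega'_\eta,\omega_\phi\rangle\,\langle\omega'_\phi,\omega_\eta\rangle,\]
which is bi-multiplicative in $(\eta,\phi)$.

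The key technical claim is that $q(\eta,\phi)=(rs^{-1})^{2(\eta,\phi)}$ for all $\eta,\phi\in Q$ (and for $\phi\in\Lambda$ after the same extension used to define $\varrho^{\lambda}$). To establish this I check the identity on pairs of simple roots by a direct case analysis using the formulas of the Proposition: the three cases $i,j<n$, exactly one of $i,j$ equal to $n$, and $i=j=n$ each give $q(\alpha_i,\alpha_j)=(rs^{-1})^{2(\alpha_i,\alpha_j)}$, matching the symmetrized Cartan entries of type $B_n$. Bi-multiplicativity of $q$ then extends this to arbitrary $(\eta,\phi)$. This case check is the only non-formal step; the asymmetric pairing formulas at index $n$ (in particular $\langle\omega'_n,\omega_n\rangle=rs^{-1}$ and $\langle\omega'_i,\omega_n\rangle=r^{2(\varepsilon_n,\alpha_i)}$) are what make the bookkeeping mildly delicate, but the verification is essentially the same as the one already carried out en route to Lemma \ref{chracter}.

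Once $q(\eta,\phi)=(rs^{-1})^{2(\eta,\phi)}$ is in hand, the conclusion is immediate from the $W$-invariance of $(\,,\,)$:
\[q(\eta,\sigma(\lambda))=(rs^{-1})^{2(\eta,\sigma(\lambda))}=(rs^{-1})^{2(\sigma^{-1}(\eta),\lambda)}=q(\sigma^{-1}(\eta),\lambda),\]
so
\[\varrho^{\sigma(\lambda)}(\omega'_\eta\omega_{-\eta})=q(\eta,\sigma(\lambda))^{-1}=q(\sigma^{-1}(\eta),\lambda)^{-1}=\varrho^{\lambda}(\omega'_{\sigma^{-1}(\eta)}\omega_{-\sigma^{-1}(\eta)}),\]
which, combined with the first step, is precisely $\varrho^{\sigma(\lambda),\mu}(u)=\varrho^{\lambda,\mu}(\sigma^{-1}(u))$. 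The main obstacle is thus purely the finite case-check identifying $q$ with $(rs^{-1})^{2(\,,\,)}$; everything else is linearity and the elementary invariance $(\sigma\eta,\sigma\lambda)=(\eta,\lambda)$.
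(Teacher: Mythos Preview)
Your proof is correct and shares the paper's overall structure: both arguments first observe that the $\mu$-dependent factor $\varrho^{0,\mu}(\omega'_\eta\omega_{-\eta})=(rs^{-1})^{(\eta-\eta,\mu)}=1$ is trivial on $U^0_\flat$, reducing everything to the $\lambda$-part. The difference lies in how that $\lambda$-part is handled. The paper simply asserts that $\varrho^{\sigma_i(\lambda),0}(u)=\varrho^{\lambda,0}(\sigma_i^{-1}(u))$ ``can be checked for each $\sigma_i$'' and then extends to all of $W$ by generation, giving essentially no details. You instead isolate the symmetrized pairing $q(\eta,\phi)=\langle\omega'_\eta,\omega_\phi\rangle\langle\omega'_\phi,\omega_\eta\rangle$, verify the closed formula $q(\eta,\phi)=(rs^{-1})^{2(\eta,\phi)}$ on pairs of simple roots, and then invoke the $W$-invariance of the Euclidean form $(\,,\,)$ to obtain the identity for \emph{all} $\sigma\in W$ at once. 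This is a genuine conceptual improvement: it explains \emph{why} the identity holds (the symmetrized pairing depends only on the $W$-invariant inner product) rather than reducing it to an opaque case-by-case verification per simple reflection. The amount of raw computation is comparable---both approaches ultimately rest on the same pairing formulas, including the delicate index-$n$ cases---but your packaging is cleaner and yields a reusable formula. One small point worth making explicit in a final write-up: since $\lambda$ ranges over $\Lambda$ rather than $Q$, you should either note that the pairing formulas from the proof of Lemma~\ref{chracter} extend verbatim to $\omega_\lambda$ with $\lambda\in\Lambda$ (they are given in terms of $(\varepsilon_j,\lambda)$), or check $q(\alpha_i,\varpi_n)$ directly, since $\varpi_n\notin Q$ in type $B_n$.
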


\begin{proof}[\upshape\KAI Proof:]
\begin{upshape}
We only need to prove this theorem for $u=\omega^\prime_{\eta}\omega_{-\eta}$.
First, we claim that
$\varrho^{\sigma(\lambda), 0}(u)=\varrho^{\lambda,
0}(\sigma^{-1}(u))$ for all $\lambda\in\Lambda$,~$\sigma\in
W$. This can be checked for each $\sigma_i$ and $u=\omega^{\prime}_{\eta}\omega_{-\eta}$.

We prove $\varrho^{0, \mu}(u)=\varrho^{0, \mu}(\sigma^{-1}(u))$ in the following.
\begin{align*}
&\varrho^{0,\mu}(\omega^{\prime\eta_{1}}_{1}\omega^{\prime\eta_{2}}_{2}\omega^{-\eta_{1}}_{1}\omega^{-\eta_{2}}_{2})\\
=&\varrho^{(0, (\mu_{1}+\frac{1}{2}\mu_{2})\alpha_{1}+(\mu_{1}+\mu_{2})\alpha_{2})}(\omega^{\prime\eta_{1}}_{1}\omega^{\prime\eta_{2}}_{2}\omega^{-\eta_{1}}_{1}\omega^{-\eta_{2}}_{2})\\
=&(rs^{-1})^{((\mu_{1}+\frac{1}{2}\mu_{2})\alpha_{1}+(\mu_{1}+\mu_{2})\alpha_{2}, 0)}\\
=&1, \\
&\varrho^{0,\mu}(\sigma^{-1}(\omega^{\prime\eta_{1}}_{1}\omega^{\prime\eta_{2}}_{2}\omega^{-\eta_{1}}_{1}\omega^{-\eta_{2}}_{2}))
=1. \\
&\varrho^{\sigma(\lambda), \mu}(u)\\
=&\varrho^{\sigma(\lambda),0}(\omega^{\prime\eta_{1}}_{1}\omega^{\prime\eta_{2}}_{2}\omega^{-\eta_{1}}_{1}\omega^{-\eta_{2}}_{2})
\varrho^{0, \mu}(\omega^{\prime\eta_{1}}_{1}\omega^{\prime\eta_{2}}_{2}\omega^{-\eta_{1}}_{1}\omega^{-\eta_{2}}_{2})\\
=&\varrho^{\lambda, 0}(\sigma^{-1}(\omega^\prime_{\eta}\omega_{-\eta}))\varrho^{0, \mu}(\sigma^{-1}(\omega^\prime_{\eta}\omega_{-\eta}))\\
=&\varrho^{\lambda, \mu}(\sigma^{-1}(\omega^{\prime\eta_{1}}_{1}\omega^{\prime\eta_{2}}_{2}\omega^{-\eta_{1}}_{1}\omega^{-\eta_{2}}_{2}))\\
=&\varrho^{\lambda,\mu}(\sigma^{-1}(\omega^{\prime}_{\eta}\omega_{-\eta})),
\end{align*}
for all $u\in U^0_{\flat},~\varrho^{\sigma(\lambda),
\mu}(u)=\varrho^{\lambda,\mu}(\sigma^{-1}(u))$.
\end{upshape}
\end{proof}

Let us define
\begin{eqnarray*}
(U^0_{\flat})^W=\{u\in U^0_{\flat}\big|\sigma(u)=u, \forall\sigma\in
W\},
\end{eqnarray*}
$\kappa_{\eta, \phi}(\lambda, \mu)=\varrho^{\lambda,
\mu}(\omega^\prime_{\eta}\omega_{\phi})$ and $\kappa^i_{\zeta,
\psi}(\lambda, \mu)=\varrho^{\sigma_{i}(\lambda),
\mu}(\omega^\prime_{\zeta}\omega_{\psi}),~(\lambda, \mu)\in
\Lambda\times\Lambda$.

\begin{yinli}\label{importantlemma}
\begin{upshape}
As for two-parameters quantum groups of type $B_{2n}$, if $u\in U_{0}$,
$\varrho^{\sigma(\lambda),\mu}(u)=\varrho^{\lambda, \mu}(u)$, for $\forall\lambda$, $\mu\in\Lambda$ and $\sigma\in W$. Then $u\in (U^0_{\flat})^W$.
\end{upshape}
\end{yinli}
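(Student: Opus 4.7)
The plan is to decompose $u = \sum_{\eta,\phi \in Q}\theta_{\eta,\phi}\omega'_\eta\omega_\phi$ by the diagonal $Q$-grading $\chi=\eta+\phi$, so $u = \sum_{\chi\in Q}u_\chi$, and to reduce the hypothesis to one condition per graded piece. The key identity to verify first is
\[
\varrho^{\lambda,\mu}(\omega'_\eta\omega_\phi) = (rs^{-1})^{-2(\eta,\lambda)+(\eta+\phi,\mu)}\,\varrho^\lambda(\omega_{\eta+\phi}),
\]
which follows from the short computation $\langle\omega'_\eta,\omega_\lambda\rangle\langle\omega'_\lambda,\omega_\eta\rangle=(rs^{-1})^{2(\eta,\lambda)}$, extended bilinearly to $\lambda\in\Lambda$. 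The $\mu$-dependence of each graded piece is the single factor $(rs^{-1})^{(\chi,\mu)}$, and since $rs^{-1}$ is not a root of unity and the pairing $Q\times\Lambda\to\mathbb{Z}$ is non-degenerate, distinct $\chi$'s give linearly independent characters of $\Lambda$ in $\mu$. So the hypothesis splits as $\varrho^{\sigma(\lambda)}(u_\chi)=\varrho^\lambda(u_\chi)$ for every $\chi\in Q$, $\lambda\in\Lambda$, $\sigma\in W$.

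The case $\chi=0$ is direct: $u_0\in U^0_\flat$ by construction, and Theorem~\ref{importantIdentity} rewrites the condition as $\varrho^{\lambda,\mu}(\sigma^{-1}(u_0))=\varrho^{\lambda,\mu}(u_0)$ for all $\lambda,\mu$, whence the corollary following Lemma~\ref{lemma:diffCharacter} forces $\sigma^{-1}(u_0)=u_0$, i.e.\ $u_0\in(U^0_\flat)^W$.

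The hard part is showing $u_\chi=0$ for every $\chi\ne 0$. I would specialize $\sigma$ to the longest element $w_0\in W$, which acts as $-1$ on $E$ in type $B$. Factoring $\varrho^\lambda(u_\chi)=\varrho^\lambda(\omega_\chi)\,h_\chi(\lambda)$ where $h_\chi(\lambda)=\sum_\eta\theta_{\eta,\chi-\eta}(rs^{-1})^{-2(\eta,\lambda)}$, and using $\varrho^{-\lambda}(\omega_\chi)=\varrho^\lambda(\omega_\chi)^{-1}$ (since $\lambda\mapsto\varrho^\lambda(\omega_\chi)$ is a multiplicative character), the $w_0$-specialization collapses to
\[
h_\chi(-\lambda)=\varrho^\lambda(\omega_\chi)^2\,h_\chi(\lambda).
\]
Writing $\varrho^\lambda(\omega_\chi)=r^{(A_\chi,\lambda)}s^{(B_\chi,\lambda)}$, both sides are linear combinations of $\Lambda$-characters $r^{(a,\lambda)}s^{(b,\lambda)}$: the LHS contributes $(a,b)=(2\eta,-2\eta)$, the RHS contributes $(a,b)=(2A_\chi-2\eta,\,2B_\chi+2\eta)$. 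By linear independence of characters (again using $r^k s^l=1\Leftrightarrow k=l=0$), a non-trivial $h_\chi$ requires some LHS character to coincide with some RHS character, and matching both slots forces $A_\chi+B_\chi=0$.

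The final step — where the even-rank hypothesis of $B_{2n}$ is exactly what is needed — is a combinatorial check. Writing $\chi=\sum_{j=1}^{N} c_j\alpha_j$ with $N$ the rank, one has $A_{\alpha_j}+B_{\alpha_j}=2(\varepsilon_j+\varepsilon_{j+1})$ for $j<N$ and $A_{\alpha_N}+B_{\alpha_N}=2(\varepsilon_N-\rho_0)$ with $\rho_0=\varepsilon_1+\cdots+\varepsilon_N$; expanding and reading off $\varepsilon$-coefficients turns $A_\chi+B_\chi=0$ into the triangular system $c_1=c_N$, $c_{j-1}+c_j=c_N$ for $2\le j\le N-1$, $c_{N-1}=0$. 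Back-substitution from $c_{N-1}=0$ upward shows this system has only the trivial solution when $N$ is even (the $B_{2n}$ assumption); when $N$ is odd it admits the family $\chi=c(\alpha_1+\alpha_3+\cdots+\alpha_N)$, which is precisely the phenomenon the paper avoids by restricting to even rank. With $A_\chi+B_\chi\ne 0$ in hand, the LHS and RHS characters have disjoint support, every coefficient $\theta_{\eta,\chi-\eta}$ must vanish, $u_\chi=0$, and the proof concludes with $u=u_0\in(U^0_\flat)^W$.
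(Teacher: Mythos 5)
Your proposal is correct, and it reaches the conclusion by a genuinely different route from the paper's. The paper does not pre-split $u$ by the diagonal grading; instead it works one simple reflection $\sigma_i$ at a time, uses linear independence of the characters $\kappa_{\eta,\phi}$ on $\Lambda\times\Lambda$ to match each surviving $\kappa_{\eta,\phi}$ with some $\kappa^i_{\zeta,\psi}$, evaluates at $(0,\varpi_j)$ to get $\eta+\phi=\zeta+\psi$ and at $(\varpi_i,0)$ to get an exponent identity, and then asserts that ``comparing the index of both sides'' yields $\zeta_i+\psi_i=0$ for all $i$ when the rank is even --- the decisive parity computation is left implicit. You instead isolate the grading $\chi=\eta+\phi$ first (using only the $\mu$-variable, which is exactly the role $(0,\varpi_j)$ plays in the paper), dispose of the $\chi=0$ piece via Theorem~\ref{importantIdentity} exactly as the paper does, and then kill each $u_\chi$ with $\chi\neq 0$ using only the single instance $\sigma=w_0=-1$, which collapses the hypothesis to the functional equation $h_\chi(-\lambda)=\varrho^{\lambda}(\omega_\chi)^2h_\chi(\lambda)$ and a disjoint-support argument for characters of $\Lambda$. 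What your approach buys is transparency: the condition $A_\chi+B_\chi=0$ becomes an explicit triangular system $c_1=c_N$, $c_{j-1}+c_j=c_N$, $c_{N-1}=0$, whose solvability visibly depends on the parity of the rank, and the odd-rank kernel $\chi=c(\alpha_1+\alpha_3+\cdots+\alpha_N)$ that you exhibit is precisely the obstruction the paper alludes to when restricting to $B_{2n}$. The two arguments share their foundations (linear independence of characters under the hypothesis $r^ks^l=1\Leftrightarrow k=l=0$, and Theorem~\ref{importantIdentity} for the final $W$-invariance), so yours is best viewed as a cleaner, fully detailed variant rather than a shortcut; the only points worth flagging are that you should note $w_0=-1$ holds specifically for type $B$, and that the exponents $(\chi,\mu)$ for $\mu\in\Lambda$ may be half-integers at $\alpha_n$, a (harmless) issue your argument shares with the paper's own use of $\varrho^{0,\varpi_i}$.
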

\begin{proof}[\upshape\KAI Proof:]
\begin{upshape}
If~$u=\Sigma_{\eta, \phi}\theta_{\eta,
\phi}\omega^{\prime}_{\eta}\omega_{\phi}\in U_{0}$,~such that
$\varrho^{\sigma(\lambda), \mu}(u)=\varrho^{\lambda,\mu}(u)$,~for all $\lambda, \mu\in\Lambda$ and $\sigma\in W$.~Identity
$
\sum\limits_{(\eta,\phi)}\theta_{\eta,\phi}\varrho^{\lambda,
\mu}(\omega^{\prime}_{\eta}\omega_{\phi})=\sum\limits_{(\zeta,
\psi)}\theta_{\zeta,\psi}\varrho^{\sigma_{i}(\lambda),
\mu}(\omega^{\prime}_{\zeta}\omega_{\psi})$
can be rewritten as
\begin{equation}\label{invariant:formula1}
\sum\limits_{(\eta, \phi)}\theta_{\eta, \phi}\kappa_{\eta, \phi}
=\sum\limits_{(\zeta, \psi)}\theta_{\zeta, \psi}\kappa^i_{\zeta,
\psi}.
\end{equation}
By Lemma \ref{lemma:diffCharacter}, both sides of identity (\ref{invariant:formula1}) are linear combinations of different characters on $\Lambda\times\Lambda$.
If $\theta_{\eta,\phi}\neq 0$, then $\kappa_{\eta,
\phi}=\kappa^i_{\zeta,\psi}$, for some $(\zeta,
\psi)\in\Lambda\times\Lambda$.
\par
Let $\eta=\sum_j\eta_j\alpha_j$,~$\phi=\sum_j\phi_j\alpha_j$,~$\zeta=\sum_j\zeta_j\alpha_j$,
 ~$\psi=\sum_j\psi_j\alpha_j$ for $1\leq j \leq n$, then
   \begin{eqnarray}
   \kappa_{\eta,\phi}(0,\varpi_j)
   &=&\varrho^{0,\varpi_j}(\omega_{\eta}^{'}\omega_{\phi})=(rs^{-1})^{(\eta+\phi,\varpi_j)}\\
   &=&\kappa_{\zeta,\psi}^{i}(0,\varpi_j)\\
   &=&\varrho^{0,\varpi_j}(\omega_{\zeta}^{'}\omega_{\psi})=(rs^{-1})^{(\zeta+\psi,\varpi_j)},
   \end{eqnarray}
therefore,
   \begin{equation}\label{invariant:formula2}
   \eta+\phi=\zeta+\psi.
   \end{equation}
By expanding $
    \kappa_{\eta,\phi}(\varpi_i,0)=\kappa_{\zeta,\psi}^{i}(\varpi_i,0),
$ we get
   \begin{eqnarray*}
   (s^2r^{-2})^{\eta_i-\zeta_i}
    =(s^2r^{-2})^{(\alpha_{i-1},-\alpha_i)\zeta_{i-1}+(\alpha_{i},-\alpha_i)\zeta_{i}+(\alpha_{i+1},-\alpha_i)\zeta_{i+1}}
    \varrho^{-\alpha_i}(\omega_{\zeta+\psi}).
   \end{eqnarray*}
   Comparing the index of both sides, it is not difficult to reach the
   conclusion $\zeta_i+\psi_i=0\quad (\forall~1\leq i \leq n)$,
   when $n$ is an even number. So $u\in U^0_{\flat}$. By Theorem \ref{importantIdentity},
$\varrho^{\lambda,\mu}(u)=\varrho^{\sigma(\lambda),\mu}(u)=\varrho^{\lambda,\mu}(\sigma^{-1}(u))$,
~for all ~$\lambda,\mu\in\Lambda$ and $\sigma\in W$. So ~$u=\sigma^{-1}(u),~u\in (U^0_{\flat})^W.$~
\end{upshape}
\end{proof}

\begin{dingli}
\begin{upshape}
For type $B_n$, $\varrho^{\lambda+\rho,\mu}(\xi(z))=\varrho^{\sigma(\lambda+\rho),\mu}(\xi(z))$.~
Especially, when $n$ is even, $\xi(Z(U))\subseteq
(U^0_{\flat})^W$.
\end{upshape}
\end{dingli}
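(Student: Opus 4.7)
The plan is to deduce the identity from a Harish-Chandra type linkage applied to Verma modules parameterized by pairs $(\lambda,\mu)\in\Lambda\times\Lambda$, and then invoke Lemma~\ref{importantlemma} for the containment when $n$ is even.

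For each pair $(\lambda,\mu)$ I would introduce the Verma module $M(\varrho^{\lambda,\mu})$ with highest weight vector $v_{\lambda,\mu}$ of character $\varrho^{\lambda,\mu}$. Writing $z=\pi(z)+\sum_{\nu>0}z_\nu$ with $z_\nu\in U^{-\nu}(\mathfrak{n}^-)U_0U^{+\nu}(\mathfrak{n}^+)$, the $z_\nu$ for $\nu>0$ kill $v_{\lambda,\mu}$, so $z\cdot v_{\lambda,\mu}=\varrho^{\lambda,\mu}(\pi(z))\,v_{\lambda,\mu}$. Using $\xi(z)=\gamma^{-\rho}(\pi(z))$ together with the identity $\varrho^{\lambda+\rho,\mu}(\gamma^{-\rho}(u))=\varrho^{\lambda,\mu}(u)$ (checked directly on generators $\omega'_\eta\omega_\phi$), one obtains $z\cdot v_{\lambda,\mu}=\varrho^{\lambda+\rho,\mu}(\xi(z))\,v_{\lambda,\mu}$. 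Next, for $\lambda\in\Lambda^+$ and $m_i:=(\lambda,\alpha_i^\vee)$, I would verify that $f_i^{m_i+1}v_{\lambda,\mu}$ is a singular vector with $U_0$-character $\varrho^{\sigma_i\cdot\lambda,\mu}$, where $\sigma_i\cdot\lambda=\lambda-(m_i+1)\alpha_i$. The verification uses the inductive $(r,s)$-commutator
\[
[e_i,f_i^{k+1}]=\frac{f_i^k}{r_i-s_i}\Bigl(A_k\,\omega_i - B_k\,\omega'_i\Bigr),\qquad
A_k=\sum_{j=0}^{k}(s_i/r_i)^j,\ B_k=\sum_{j=0}^{k}(r_i/s_i)^j,
\]
derived from $(B2),(B3),(B4)$. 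The critical observation is that the ratio $\varrho^{\lambda,\mu}(\omega'_i)/\varrho^{\lambda,\mu}(\omega_i)=\varrho^{\lambda}(\omega'_i)/\varrho^{\lambda}(\omega_i)$ is $\mu$-independent (the $(rs^{-1})^{(\alpha_i,\mu)}$ factors cancel), so the vanishing condition $A_{m_i}\varrho^{\lambda,\mu}(\omega_i)=B_{m_i}\varrho^{\lambda,\mu}(\omega'_i)$ at $k=m_i$ reduces to the classical $\mu=0$ case; tracking $\omega_j,\omega'_j$ past $f_i^{m_i+1}$ via the commutation scalars in $(B2),(B3)$ confirms the character assignment.

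The resulting embedding $M(\varrho^{\sigma_i\cdot\lambda,\mu})\hookrightarrow M(\varrho^{\lambda,\mu})$ forces the scalar by which $z$ acts on both modules to coincide, yielding $\varrho^{\lambda+\rho,\mu}(\xi(z))=\varrho^{\sigma_i(\lambda+\rho),\mu}(\xi(z))$ for all $\lambda\in\Lambda^+$, $\mu\in\Lambda$, and simple reflections $\sigma_i$. Writing $\xi(z)=\sum a_{\eta,\phi}\omega'_\eta\omega_\phi$, both sides become finite $\mathbb{K}$-linear combinations of distinct characters of $\Lambda$; Dedekind-Artin linear independence (already invoked in Lemma~\ref{chracter}) upgrades agreement on the Zariski-dense subset $\Lambda^+$ to agreement on all of $\Lambda$, and composing simple reflections extends the identity to every $\sigma\in W$. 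This proves the first statement. Since $\rho=\sum_i\varpi_i\in\Lambda$, reindexing $\lambda'=\lambda+\rho$ gives $\varrho^{\lambda',\mu}(\xi(z))=\varrho^{\sigma(\lambda'),\mu}(\xi(z))$ for all $\lambda',\mu\in\Lambda$, so Lemma~\ref{importantlemma}, valid when $n$ is even, concludes $\xi(z)\in(U^0_\flat)^W$.

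The principal obstacle will be the singular vector computation: deriving the closed form of $[e_i,f_i^{k+1}]$ in the two-parameter setting and, more delicately, handling the short simple root $\alpha_n$, where the commutation relations in $(B2),(B3)$ differ from those at $i<n$ and the bracket $\omega_n f_n\omega_n^{-1}$ involves both $r$ and $s$. The $\mu$-invariance of the eigenvalue ratio is exactly the mechanism that allows the classical one-parameter Harish-Chandra linkage to carry over to pairs $(\lambda,\mu)$ and produce $W$-invariance of $\xi(z)$ in its first argument alone.
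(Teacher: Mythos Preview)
Your proposal is correct and follows essentially the same route as the paper: act by $z$ on $v_{\lambda,\mu}$ and on the singular vector $f_i^{(\lambda,\alpha_i^\vee)+1}v_{\lambda,\mu}$ (the paper cites \cite{BGH2}, Lemma~2.6, for the commutator identity you derive by hand, and makes the same $\mu$-independence observation $\varrho^{0,\mu}(\omega_i)=\varrho^{0,\mu}(\omega_i')$), then invoke Lemma~\ref{importantlemma}. The one difference is in extending the identity from $\{\lambda:(\lambda,\alpha_i^\vee)\ge 0\}$ to all of $\Lambda$: the paper uses the elementary swap $\lambda\mapsto\lambda'=\sigma_i(\lambda+\rho)-\rho$ (together with the trivial case $(\lambda,\alpha_i^\vee)=-1$) rather than your density/Dedekind--Artin argument; the swap is cleaner and sidesteps the need to justify why vanishing on a half-lattice forces vanishing everywhere, which is not an immediate consequence of linear independence of characters as stated.
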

\begin{proof}[\upshape\KAI Proof:]
\begin{upshape}
If $z\in Z(U)$, choose suitable $\lambda, \mu\in\Lambda$ such that  ~$(\lambda,
\alpha^{\vee}_{i})\geq 0$ for some fixed $i$. Denote by $v_{\lambda, \mu}\in M(\varrho^{\lambda, \mu})$ the highest weight vector.
\begin{eqnarray*}
zv_{\lambda, \mu}=\pi(z)v_{\lambda, \mu}=\varrho^{\lambda,
\mu}(\pi(z))v_{\lambda, \mu}=\varrho^{\lambda+\rho,
\mu}(\xi(z))v_{\lambda, \mu}.
\end{eqnarray*}
So $z$ acts on $M(\varrho^{\lambda, \mu})$ with scalar
$\varrho^{\lambda+\rho,
\mu}(\xi(z))$. By Lemma 2.6 in \cite{{BGH2}},
\vspace{0.3cm}
\begin{eqnarray}\label{including:formula}
e_{i}f_{i}^{(\lambda, \alpha_{i}^\vee)+1}v_{\lambda,
\mu}=[(\lambda, \alpha_{i}^\vee)+1]_{i}f_{i}^{(\lambda,
\alpha_{i}^\vee)}\frac {r_{i}^{-(\lambda,
\alpha_{i}^\vee)}\omega_{i}-s_{i}^{-(\lambda,
\alpha_{i}^\vee)}\omega^\prime_{i}}{r_{i}-s_{i}}v_{\lambda, \mu}.
\end{eqnarray}
\begin{eqnarray*}
\lefteqn{\left(r_i^{-(\lambda,\alpha_i^{\vee})}\omega_i-s_i^{-(\lambda,\alpha_i^{\vee})}\omega_i^{'}
         \right)v_{\lambda,\mu}}
\hspace{1cm}\\
&=&\left(r_i^{-(\lambda,\alpha_i^{\vee})}\varrho^{\lambda,\mu}(\omega_i)-
   s_i^{-(\lambda,\alpha_i^{\vee})}\varrho^{\lambda,\mu}(\omega_i^{'})
   \right)v_{\lambda,\mu}\\
&=&\left(r_i^{-(\lambda,\alpha_i^{\vee})}\varrho^{\lambda,0}(\omega_i)-s_i^{-(\lambda,\alpha_i^{\vee})}\varrho^{\lambda,0}(\varpi_i^{'})
   \right)\varrho^{0,\mu}(\omega_i)v_{\lambda,\mu}\\
&=&0.
\end{eqnarray*}
So $e_{j}f_{i}^{(\lambda, \alpha_{i}^\vee)+1}v_{\lambda, \mu}=0$.~
\begin{align*}\vspace{0.2cm}
zf_{i}^{(\lambda, \alpha_{i}^\vee)+1}v_{\lambda, \mu}&=\pi(z)f_{i}^{(\lambda, \alpha_{i}^\vee)+1}v_{\lambda, \mu}\\
&=\varrho^{\sigma_{i}(\lambda+\rho)-\rho, \mu}(\pi(z))f_{i}^{(\lambda, \alpha_{i}^\vee)+1}v_{\lambda, \mu}\\
&=\varrho^{\sigma_{i}(\lambda+\rho), \mu}(\xi(z))f_{i}^{(\lambda,
\alpha_{i}^\vee)+1}v_{\lambda, \mu}.
\end{align*}
$z$ acts on $M(\varrho^{\lambda, \mu})$ by scalar
$\varrho^{\sigma_{i}(\lambda+\rho), \mu}(\xi(z))$. So
\begin{equation}\label{including:formula2}
\varrho^{\lambda+\rho,
\mu}(\xi(z))=\varrho^{\sigma_{i}(\lambda+\rho), \mu}(\xi(z)).
\end{equation}\par
Now let us prove that (\ref{including:formula2}) is true for arbitrary $\lambda\in\Lambda$.
If $(\lambda, \alpha_{i}^\vee)=-1$,
then $\lambda+\rho=\sigma_{i}(\lambda+\rho)$,
~(\ref{including:formula2}) is true. If $(\lambda,
\alpha_{i}^\vee)<-1$, let
$\lambda^\prime=\sigma_{i}(\lambda+\rho)-\rho$, then
$(\lambda^\prime,\alpha_{i}^\vee)\geq0$,~$\lambda^\prime$~
such that (\ref{including:formula2}) exists.
Putting $\lambda^\prime=\sigma_{i}(\lambda+\rho)-\rho$~
on ~(\ref{including:formula2}), we can get the required formula.
Since the Weyl group is generated by $\sigma_{i}$'s, so
\begin{equation}
\varrho^{\lambda+\rho, \mu}(\xi(z))=\varrho^{\sigma(\lambda+\rho),
\mu}(\xi(z)),
\end{equation}
for all $\lambda, \mu\in\Lambda, \sigma\in
W$. By Lemma \ref{importantlemma}, the proof is complete.
\end{upshape}
\end{proof}

\section{$\xi$ is an algebra isomorphism for $U_{r,s}({\mathfrak{so}_{4n+1}})$ }

\begin{yinli}\label{isCenterElement}
\begin{upshape}
~$z\in
Z(U)$ if and only if $\mathrm{ad_{l}}(x)z=(\iota\circ\varepsilon(x))z$~
for all $x\in U$. $\varepsilon:U\rightarrow\mathbb{K}$ is the counit of $U$, $\iota:\mathbb{K}\rightarrow U$ is the unit map of $U$.
\end{upshape}
\end{yinli}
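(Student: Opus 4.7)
The plan is to recognise this as a standard Hopf-algebraic identity, with the left adjoint action given in Sweedler notation by $\mathrm{ad}_l(x)y = \sum x_{(1)}\, y\, S(x_{(2)})$, and then prove both implications directly from the Hopf algebra axioms, using only the antipode axiom $\sum x_{(1)} S(x_{(2)}) = \varepsilon(x)1$ and the counit axiom $\sum \varepsilon(x_{(1)}) x_{(2)} = x$. No feature specific to $U_{r,s}(\mathfrak{so}_{2n+1})$ is needed beyond its being a Hopf algebra.

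For the forward direction, assume $z\in Z(U)$. Then $z$ commutes past the factor $x_{(1)}$, so
\[
\mathrm{ad}_l(x)z = \sum x_{(1)}\, z\, S(x_{(2)}) = z \sum x_{(1)} S(x_{(2)}) = z\, \varepsilon(x) 1 = (\iota\circ\varepsilon)(x)\, z,
\]
which is the required identity.

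The backward direction is the substantive content. I would start from the hypothesis $\sum x_{(1)} z S(x_{(2)}) = \varepsilon(x) z$ for every $x\in U$, replace $x$ by $\Delta(x)$ (equivalently, use coassociativity to write $(\Delta\otimes \mathrm{id})\Delta(x) = \sum x_{(1)}\otimes x_{(2)}\otimes x_{(3)}$), and then multiply the resulting identity on the right by an appropriate remaining factor. Concretely, apply the hypothesis with $x_{(2)}$ in place of $x$ to get $\sum x_{(2)(1)}\, z\, S(x_{(2)(2)}) = \varepsilon(x_{(2)}) z$, then multiply on the left by $x_{(1)}$ and on the right by $x_{(3)}$, and use coassociativity together with the antipode axiom $\sum x_{(2)} S(x_{(3)}) = \varepsilon(x_{(2)}) 1$ to collapse the $S$-term. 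After simplification using $\sum \varepsilon(x_{(2)}) x_{(3)} = x_{(2)}$ (after relabelling), one obtains $xz = zx$ for all $x\in U$, so $z\in Z(U)$.

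The main (minor) obstacle is bookkeeping with the iterated coproduct and making sure the correct Sweedler indices are paired with $S$, $\varepsilon$, and the multiplication; since $U$ is a Hopf algebra (with the explicit $\Delta$, $\varepsilon$, $S$ listed in Definition~2.1), no further structural input is required, and the argument is entirely formal.
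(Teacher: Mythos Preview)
Your forward direction is identical to the paper's. For the backward direction your approach is \emph{different} from the paper's, and both are correct.

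The paper does not use a general Hopf-algebraic manipulation; instead it verifies $xz=zx$ on the algebra generators $\omega_i,\omega_i',e_i,f_i$ using the explicit coproduct formulas from Definition~2.1. For group-likes, $\mathrm{ad}_l(\omega_i)z=\omega_i z\omega_i^{-1}=z$ gives commutation immediately; then for $e_i$ one computes $\mathrm{ad}_l(e_i)z=e_iz-\omega_i z\omega_i^{-1}e_i=e_iz-ze_i$, and similarly for $f_i$. Since $U$ is generated by these elements, $z$ is central.

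Your route---insert $z$ into a threefold coproduct and collapse with the antipode and counit axioms---is the standard categorical proof that ad-invariants are central in \emph{any} Hopf algebra. A slightly cleaner phrasing than the one you sketched is
\[
zx=\sum \varepsilon(x_{(1)})\,z\,x_{(2)}
=\sum \bigl(\mathrm{ad}_l(x_{(1)})z\bigr)x_{(2)}
=\sum x_{(1)}\,z\,S(x_{(2)})\,x_{(3)}
=\sum x_{(1)}\varepsilon(x_{(2)})\,z
=xz,
\]
which avoids the somewhat tangled ``apply the hypothesis to $x_{(2)}$ and multiply by $x_{(1)}$ and $x_{(3)}$'' description (where your indexing was a little loose). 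Your argument buys generality and makes clear that nothing about $U_{r,s}(\mathfrak{so}_{2n+1})$ is used; the paper's argument buys concreteness and transparency at the cost of relying on the specific presentation of $U$.
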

\begin{proof}[\upshape\KAI Proof:]
\begin{upshape}
$z\in Z(U)$ for all $x\in U$,
\begin{eqnarray*}
\mathrm{ad_{l}}(x)z=\sum\limits_{(x)}x_{(1)}z
S(x_{(2)})=z\sum\limits_{(x)}x_{(1)}S(x_{(2)})=(\iota\circ\varepsilon)(x)z.
\end{eqnarray*}
On contrary, if $\mathrm{ad_{l}}(x)z=(\iota\circ\varepsilon)(x)z$, for all $x\in U$, then
\begin{eqnarray*}
\omega_{i}z\omega^{-1}_{i}=\mathrm{ad_{l}}(\omega_{i})z=(\iota\circ\varepsilon)(\omega_{i})z=z.
\end{eqnarray*}
For the same reason, $\omega^{\prime}_{i}z(\omega_i^{\prime})^{-1}=z$, and
\begin{eqnarray*}
0=(\iota\circ\varepsilon)(e_{i})z=\mathrm{ad_{l}}(e_{i})z=e_{i}z+\omega_{i}z(-\omega_{i}^{-1})e_{i}=e_{i}z-ze_{i}
\end{eqnarray*}
\begin{eqnarray*}
0=(\iota\circ\varepsilon)(f_{i})z
=\mathrm{ad_{l}}(f_{i})z
=z(-f_{i}(\omega^{\prime}_{i})^{-1})+f_{i}z(\omega^{\prime}_{i})^{-1}
=(f_{i}z-zf_{i})(\omega^{\prime}_{i})^{-1}.
\end{eqnarray*}
So $z\in Z(u). $
\end{upshape}
\end{proof}

\begin{yinli}\label{findU}
\begin{upshape}
Suppose $\Psi: U^{-\mu}(\mathfrak{n}^-)\times U^{\nu}(\mathfrak{n}^+)\rightarrow\mathbb{K}$~
is a bilinear function, $(\eta, \phi)\in Q\times Q$. There exits a $u\in
U^{-\nu}(\mathfrak{n}^-)U_{0}U^{\mu}(\mathfrak{n}^+)$ such that
\begin{equation}\label{formulaFindU}
\langle u,
y\omega_{\eta_{1}}^\prime\omega_{\phi_{1}}x\rangle_{U}=\langle\omega_{\eta_{1}}^\prime,
\omega_{\phi}\rangle \langle\omega_{\eta}^\prime,
\omega_{\phi_{1}}\rangle\Psi(y, x),
\end{equation} for all $x\in U^{\nu}(\mathfrak{n}^+),~y\in U^{-\mu}(\mathfrak{n}^-)$.
\end{upshape}
\end{yinli}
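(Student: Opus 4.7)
The plan is to construct the required element $u$ explicitly in terms of dual bases supplied by the nondegeneracy of the skew pairing (Theorem~\ref{BGH1Th2.14}). First I would choose a basis $\{y^\mu_k\}$ of $U^{-\mu}(\mathfrak{n}^-)$ together with its dual basis $\{E^\mu_k\}$ of $U^{\mu}(\mathfrak{n}^+)$, i.e.\ $\langle y^\mu_k, E^\mu_l\rangle=\delta_{kl}$; similarly a basis $\{x^\nu_j\}$ of $U^{\nu}(\mathfrak{n}^+)$ and its dual basis $\{F^\nu_j\}$ of $U^{-\nu}(\mathfrak{n}^-)$, so that $\langle F^\nu_i, x^\nu_j\rangle=\delta_{ij}$. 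This is the only place where the nondegeneracy result is needed.

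Next I would set
\[
u \;=\; \sum_{j,k}\Psi(y^\mu_k,\,x^\nu_j)\,(S^2)^{-1}(F^\nu_j)\,\omega^\prime_{\eta}\omega_{\phi}\,E^\mu_k.
\]
A short check on the generators shows that $S^2$ acts on each $f_i$ (and on each $e_i$) as multiplication by a scalar; indeed $S^2(f_i)=\omega^\prime_i f_i (\omega^\prime_i)^{-1}$, which is a scalar multiple of $f_i$ by relation $(B3)$. Hence $S^2$ restricts to a linear isomorphism of each graded piece $U^{-\nu}(\mathfrak{n}^-)$, so $(S^2)^{-1}(F^\nu_j)\in U^{-\nu}(\mathfrak{n}^-)$ and the displayed $u$ lies in $U^{-\nu}(\mathfrak{n}^-)U_{0}U^{\mu}(\mathfrak{n}^+)$, as required.

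Then I would verify (\ref{formulaFindU}) summand by summand, by plugging $u$ into the defining expression of the Rosso form. Pairing a single term $(S^2)^{-1}(F^\nu_j)\,\omega^\prime_{\eta}\omega_{\phi}\,E^\mu_k$ against $y\,\omega^\prime_{\eta_1}\omega_{\phi_1}\,x$ and using $\langle S^2((S^2)^{-1}(F^\nu_j)),\,x\rangle=\langle F^\nu_j,\,x\rangle$ produces the factor $\langle\omega^\prime_{\eta_1},\omega_{\phi}\rangle\langle\omega^\prime_{\eta},\omega_{\phi_1}\rangle\,\langle y, E^\mu_k\rangle\,\langle F^\nu_j, x\rangle$. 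Summing over $j,k$ and using the expansions $y=\sum_k\langle y, E^\mu_k\rangle\,y^\mu_k$ and $x=\sum_j\langle F^\nu_j, x\rangle\,x^\nu_j$, which follow immediately from duality, bilinearity of $\Psi$ collapses the double sum to $\Psi(y,x)$, yielding the right-hand side of (\ref{formulaFindU}).

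I do not anticipate any serious obstacle. The nontrivial input is the nondegeneracy supplied by Theorem~\ref{BGH1Th2.14}, and the rest is a direct bookkeeping computation with the explicit formula of the Rosso form. The only mildly delicate point is confirming that $S^2$ preserves each graded piece $U^{-\nu}(\mathfrak{n}^-)$ so that $(S^2)^{-1}$ is meaningful within that subspace; as noted, this follows from a one-line check on the generators.
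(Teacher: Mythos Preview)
Your proof is correct and follows essentially the same construction as the paper's. The paper also picks dual bases via Theorem~\ref{BGH1Th2.14} and writes down the candidate element explicitly; the only cosmetic difference is that where you write $(S^2)^{-1}(F^\nu_j)$, the paper inserts the explicit scalar $(rs^{-1})^{-2(\rho,\nu)}$, since $S^2$ acts on all of $U^{-\nu}(\mathfrak{n}^-)$ as multiplication by $(rs^{-1})^{2(\rho,\nu)}$ (this follows from your observation $S^2(f_i)=\langle\omega'_i,\omega_i\rangle f_i$ together with multiplicativity of $S^2$).
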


\begin{proof}[\upshape\KAI Proof:]
\begin{upshape}
Let $\mu\in Q^+$, $\{u^\mu_{1}, u^\mu_{2},\cdots, u^\mu_{d_{\mu}}\}$
be a basis of $U^{\mu}(\mathfrak{n}^+)$, $\{v^\mu_{1}, v^\mu_{2},\cdots,
v^\mu_{d_{\mu}}\}$ be a basis of $U^{-\mu}(\mathfrak{n}^-)$ such that $\langle v^\mu_{i},u^\mu_{j}\rangle=\delta_{ij}$.
Let
\begin{eqnarray*}
u=\sum\limits_{i, j}\Psi(v^\mu_{j},
u^\nu_{i})v^\nu_{i}\omega^{\prime}_{\eta}\omega_{\phi}u^\mu_{j}(rs^{-1})^{-2(\rho,
\nu)}.
\end{eqnarray*}
It is easy to check that $u$ satisfies identity (\ref{formulaFindU}).
\end{upshape}
\end{proof}
Define a $U$-module structure on $U^*$ as $f\in U^*,
(x.f)(v)=f(\mathrm{ad}(S(x))v)$. Let $\beta:U\rightarrow U^*$,
\begin{eqnarray}
\beta(u)(v)= \langle u ,  v\rangle_{U}\quad u, v\in U.
\end{eqnarray}
By the non-degeneracy of $\langle$,~$\rangle_{U}$, $\beta$ is injective.

\begin{dingyi}
\begin{upshape}
$M$ is a finite dimensional $U$-module.
Define $C_{f,m}\in U^*$, $C_{f,m}(v)=f(v.m)$, $v\in U$, for $m\in M$ and $f\in M^*$.
\end{upshape}
\end{dingyi}

\begin{dingli}
\begin{upshape}
$M$ is a finite dimensional $U$-module and $M=\bigoplus\limits_{\lambda\in
wt(M)}M_{\lambda}$,
\begin{eqnarray*}
M_{\lambda}=\big\{m\in
M\big|(\omega_{i}-\varrho^\lambda(\omega_{i})) m=0, \quad
(\omega^\prime_{i}-\varrho^\lambda(\omega^\prime_{i})) m=0\big\}.
\end{eqnarray*} for $f\in M^*,m\in M$, there exists a unique $u\in U$
such that $C_{f, m}(v)=\langle u, v\rangle_{U}$ for all $v\in U$.
\end{upshape}
\end{dingli}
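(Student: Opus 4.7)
The plan is to handle uniqueness by the non-degeneracy of the Rosso form $\langle\,,\,\rangle_{U}$, and to construct $u$ block-by-block from the triangular decomposition of $U$ together with Lemma~\ref{findU}. Uniqueness is immediate: if both $u$ and $u'$ induce $C_{f,m}$ via $\langle\,,\,\rangle_{U}$, then $u-u'$ lies in the radical of the Rosso form, which is zero.

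For existence, by bilinearity of $C_{f,m}$ in $(f,m)$ and the finite weight decomposition of $M$, I reduce to the case where $m\in M_{\mu}$ is a weight vector and $f$ is supported on a single weight space $M_{\lambda}$. For $v=y\,\omega'_{\eta_{1}}\omega_{\phi_{1}}x$ in the triangular block $U^{-\sigma}(\mathfrak{n}^-)\,U_{0}\,U^{\tau}(\mathfrak{n}^+)$ with $y\in U^{-\sigma}(\mathfrak{n}^-)$, $x\in U^{\tau}(\mathfrak{n}^+)$, a direct computation on the weight vector gives
\[
C_{f,m}(v)\;=\;\varrho^{\mu+\tau}(\omega'_{\eta_{1}}\omega_{\phi_{1}})\,f(yx.m),
\]
which vanishes unless $\mu+\tau-\sigma=\lambda$ and $\mu+\tau\in\mathrm{wt}(M)$. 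Since $\mathrm{wt}(M)$ is finite, only finitely many $\tau\in Q^{+}$ survive. For each such $\tau$ I set $\sigma=\tau+\mu-\lambda$, define $\Psi_{\tau}(y,x)=f(yx.m)$, and apply Lemma~\ref{findU} with $\eta=\mu+\tau$, $\phi=-(\mu+\tau)$ so that the prefactor $\langle\omega'_{\eta_{1}},\omega_{\phi}\rangle\langle\omega'_{\eta},\omega_{\phi_{1}}\rangle$ collapses to $\varrho^{\mu+\tau}(\omega'_{\eta_{1}})\,\varrho^{\mu+\tau}(\omega_{\phi_{1}})=\varrho^{\mu+\tau}(\omega'_{\eta_{1}}\omega_{\phi_{1}})$. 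This yields $u_{\tau}\in U^{-\tau}(\mathfrak{n}^-)\,U_{0}\,U^{\sigma}(\mathfrak{n}^+)$ realizing $C_{f,m}$ on its own block. Setting $u:=\sum_{\tau}u_{\tau}$ (a finite sum) and invoking the block-orthogonality of the Rosso form (the pairing between $U^{-\tau}U_{0}U^{\sigma}$ and $U^{-\sigma'}U_{0}U^{\tau'}$ vanishes unless $\sigma'=\tau$, $\tau'=\sigma$), one verifies $\langle u,v\rangle_{U}=C_{f,m}(v)$ block-by-block: on the matched block $u_{\tau}$ reproduces the correct value via the choice of $\Psi_{\tau}$, while on unmatched blocks both sides vanish.

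The main obstacle is the character-matching step: the prescription $\eta=\mu+\tau$, $\phi=-(\mu+\tau)$ requires admissible labels for elements of $U_{0}$. When the weights of $M$ lie in the root lattice $Q$, this is automatic. For $\mu\in\Lambda\setminus Q$ one must either enlarge $U_{0}$ by formally adjoining $\omega_{\varpi_{i}}^{\pm 1},\omega_{\varpi_{i}}^{\prime\pm 1}$ and check that the assembled $u$ lies back in $U$, or exploit the bicharacter structure of $\langle\,,\,\rangle$ to realize the scalar $\varrho^{\mu+\tau}$ on $U_{0}$ via a different pair $(\eta,\phi)\in Q\times Q$ depending on the block. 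Once this is settled, the rest is routine bookkeeping with finite sums and repeated use of the non-degeneracy of the Rosso form.
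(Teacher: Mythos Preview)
Your approach is essentially the same as the paper's: reduce to a weight vector $m\in M_{\lambda}$, compute $C_{f,m}(y\,\omega'_{\eta_{1}}\omega_{\phi_{1}}x)=\varrho^{\lambda+\nu}(\omega'_{\eta_{1}}\omega_{\phi_{1}})\,f(yx.m)$ on each triangular block, invoke Lemma~\ref{findU} with the pair $(\eta,\phi)=(\lambda+\nu,\,-(\lambda+\nu))$, and assemble $u$ as a finite sum over the surviving blocks (the paper does not additionally restrict $f$ to a single weight space, but this is cosmetic). The obstacle you flag about weights lying outside $Q$ is real and the paper does \emph{not} address it: it simply writes $\omega'_{\lambda+\nu}$ and $\omega_{-\lambda-\nu}$ as though $\lambda\in Q$, which is harmless in its only application (to $L(\lambda)$ with $\lambda\in\Lambda^{+}\cap Q$) but is, as you note, a gap in the statement at full generality.
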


\begin{proof}[\upshape\KAI Proof:]
\begin{upshape}
From the non-degeneracy of $\langle~, \rangle_{U}$, it is easy to see that the existence of $u$ is unique. Suppose $m\in M_{\lambda}$,
\begin{eqnarray*}
v=y\omega^\prime_{\eta_{1}}\omega_{\phi_{1}}x, \quad x\in
U^{\nu}(\mathfrak{n}^+), \quad y\in U^{-\mu}(\mathfrak{n}^-), \quad (\eta_{1},
\phi_{1})\in Q\times Q,
\end{eqnarray*}
\begin{align*}
C_{f, m}(v)&=C_{f, m}(y\omega^\prime_{\eta_{1}}\omega_{\phi_{1}}x)=f(y\omega^\prime_{\eta_{1}}\omega_{\phi_{1}}x.m)\\
          &=\varrho^{\lambda+\nu}(\omega^\prime_{\eta_{1}}\omega_{\phi_{1}})f(y. x. m).
\end{align*}
~$(y, x)\mapsto f(y. x. m)$ is a bilinear function,
\begin{align*}
\varrho^{\lambda+\nu}(\omega^\prime_{\eta_{1}})=\langle\omega^\prime_{\eta_{1}},
\omega_{\lambda+\nu}\rangle^{-1}, \quad
\varrho^{\lambda+\nu}(\omega_{\phi_{1}})=\langle\omega^\prime_{\lambda+\nu},
\omega_{\phi_{1}}\rangle.
\end{align*} So,
$
C_{f, m}(v)=\langle\omega^\prime_{\eta_{1}},
\omega_{-\lambda-\nu}\rangle\langle\omega^\prime_{\lambda+\nu},
\omega_{\phi_{1}}\rangle f(y. x. m).
$
By Lemma \ref{findU}, there exists $u_{\nu\mu}$ such that $C_{f, m}(v)=\langle
u_{\nu\mu}, v \rangle_{U}$ for all $v\in
U^{-\mu}(\mathfrak{n}^-)U_{0}U^{\nu}(\mathfrak{n}^+)$.

Now for arbitrary $v\in U$, $v=\sum\limits_{\mu, \nu}v_{\mu\nu}$,
where $v_{\mu\nu}\in U^{-\mu}(\mathfrak{n}^-)U_{0}U^{\nu}(\mathfrak{n}^+)$.
Because $M$ is finite dimensional, there exists a finite set $\Omega$ such that  \vspace{0.2cm}
\begin{align*}
C_{f, m}(v)=C_{f, m}\Big(\sum\limits_{(\mu, \nu)\in\Omega
}v_{\mu\nu}\Big),~\forall~ v\in U.
\end{align*}
Let $u=\sum\limits_{(\mu, \nu)\in \Omega}u_{\nu\mu}$, then
\begin{align*}
C_{f, m}(v)&=C_{f, m}\Big(\sum\limits_{(\mu, \nu)\in\Omega}v_{\mu\nu}\Big)=\sum\limits_{(\mu, \nu)\in\Omega}C_{f, m}(v_{\mu\nu})\\
          &=\sum\limits_{(\mu, \nu)\in\Omega}\langle u_{\nu\mu}, v_{\mu\nu}\rangle_{U}=\sum\limits_{(\mu, \nu)\in\Omega}\langle
          u_{\nu\mu}, v\rangle_{U}\\
          &=\langle u,  v\rangle_{U}.
\end{align*}
\end{upshape}

This completes the proof.
\end{proof}

Suppose $M$ is a module in the category $\mathcal{O}$, and define a linear transformation
$\Theta: M\rightarrow M$,
\begin{align}
\Theta(m)=(rs^{-1})^{-2(\rho, \lambda)}m,
\end{align}
for all $m\in M_{\lambda},~\lambda\in\Lambda$.

\begin{yinli}
\begin{upshape}
$\Theta u=S^2(u)\Theta$, for all $u\in U$.
\end{upshape}
\end{yinli}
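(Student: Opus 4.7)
The plan is to verify the identity $\Theta u = S^2(u)\Theta$ on a generating set of $U$, then extend by multiplicativity. Since $S$ is an algebra antihomomorphism, $S^2$ is an algebra homomorphism, so if $\Theta a = S^2(a)\Theta$ and $\Theta b = S^2(b)\Theta$ hold on $M$, then $\Theta(ab) = S^2(a)\Theta b = S^2(a)S^2(b)\Theta = S^2(ab)\Theta$. Therefore it is enough to check the identity for $u \in \{\omega_i^{\pm 1}, \omega_i'^{\pm 1}, e_i, f_i\}$.

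For the Cartan-type generators $\omega_i^{\pm 1}$ and $\omega_i'^{\pm 1}$, the antipode formulas $S(\omega_i^{\pm 1}) = \omega_i^{\mp 1}$ and $S(\omega_i'^{\pm 1}) = \omega_i'^{\mp 1}$ give $S^2 = \mathrm{id}$ on these elements. Since they preserve every weight space $M_\lambda$ and $\Theta$ acts by a scalar on $M_\lambda$, the identity is immediate.

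For $e_i$, using $S(e_i) = -\omega_i^{-1} e_i$ and the fact that $S$ is an antihomomorphism, I compute $S^2(e_i) = \omega_i^{-1} e_i \omega_i$. Then by the corollary that rewrites $(B2)$ as $\omega_\varsigma e_i \omega_\varsigma^{-1} = \langle \omega_i', \omega_\varsigma \rangle e_i$, this yields $S^2(e_i) = \langle \omega_i', \omega_i\rangle^{-1} e_i$. Taking $m \in M_\lambda$, so that $e_i m \in M_{\lambda+\alpha_i}$, the required identity
\[
(rs^{-1})^{-2(\rho,\lambda+\alpha_i)} e_i m \;=\; \langle \omega_i',\omega_i\rangle^{-1}(rs^{-1})^{-2(\rho,\lambda)} e_i m
\]
reduces to the scalar equation $\langle \omega_i',\omega_i\rangle = (rs^{-1})^{2(\rho,\alpha_i)} = (rs^{-1})^{(\alpha_i,\alpha_i)}$. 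An entirely parallel computation for $f_i$ gives $S^2(f_i) = \omega_i' f_i \omega_i'^{-1} = \langle \omega_i',\omega_i\rangle f_i$, and reduces to the same scalar identity.

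The only substantive step, therefore, is verifying $\langle \omega_i',\omega_i\rangle = (rs^{-1})^{(\alpha_i,\alpha_i)}$, and this is read off directly from the skew-pairing in the proposition of section 2: for $i<n$, $\langle \omega_i',\omega_i\rangle = r^{2(\varepsilon_i,\alpha_i)} s^{2(\varepsilon_{i+1},\alpha_i)} = r^2 s^{-2} = (rs^{-1})^{(\alpha_i,\alpha_i)}$ since $(\alpha_i,\alpha_i)=2$; for $i=n$, $\langle \omega_n',\omega_n\rangle = rs^{-1} = (rs^{-1})^{(\alpha_n,\alpha_n)}$ since $(\alpha_n,\alpha_n)=1$. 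Combined with the generating argument, this completes the proof. There is no real obstacle here; the lemma is essentially a bookkeeping consequence of the chosen normalization of $\Theta$ against the Cartan-eigenvalues of $S^2$ on the generators.
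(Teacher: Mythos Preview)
Your proof is correct and follows essentially the same approach as the paper: both verify the identity on the generators $e_i$, $f_i$, $\omega_i^{\pm1}$, $\omega_i'^{\pm1}$ by computing $S^2$ there and reducing to the scalar identity $\langle\omega_i',\omega_i\rangle=(rs^{-1})^{2(\rho,\alpha_i)}$. Your write-up is in fact a bit more complete, since you make explicit the multiplicativity reduction via $S^2$ being an algebra homomorphism and you verify the scalar identity directly from the pairing values, whereas the paper leaves these steps implicit.
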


\begin{proof}[\upshape\KAI Proof:]
\begin{upshape}
We only need to check it for the generators $e_{i}$,~$f_{i}$,~$\omega_{i}$ and $\omega^\prime_{i}$. For any $m\in M_{\lambda}$,
we have
\begin{align*}
S^2(e_{i})\Theta(m)&=S^2(e_{i})(rs^{-1})^{-2(\rho, \lambda)}.m
                    =\omega_{i}^{-1}e_{i}\omega_{i}(rs^{-1})^{-2(\rho, \lambda)}.m\\
                   &=\frac{1}{\langle\omega^\prime_{i},\omega_i\rangle}(rs^{-1})^{-2(\rho, \lambda)}.m
                    =(rs^{-1})^{-2(\rho, \lambda+\alpha_{i})}e_{i}.m\\
                   &=\Theta(e_{i}.m),~1\leq i<n.
\end{align*}    
\begin{align*}               
S^2(f_{i})\Theta.  m&=S^2(f_{i})(rs^{-1})^{-2(\rho, \lambda)}.m
                     =\omega^\prime_{i}f_{i}\omega^{^\prime-1}_{i}(rs^{-1})^{-2(\rho, \lambda)}.m\\
                    &=\langle\omega^\prime_{i},\omega_i\rangle(rs^{-1})^{-2(\rho,\lambda)}f_i.m
                     =(rs^{-1})^{-2(\rho, \lambda-\alpha_{i})}f_{i}. m,\\
                    &= \Theta(f_{i}. m),~1\leq i<n.\\
S^2(e_n)\Theta(m)&=\omega^{-1}_n e_n \omega_n (rs^{-1})^{-2(\rho,\lambda)}.m\\
                 &=\frac{1}{\langle \omega^\prime_n,\omega_n \rangle}(rs^{-1})^{-2(\rho,\lambda)}e_n.m\\
                 &=(rs^{-1})^{-2(\rho,\lambda+\alpha_n)}e_n.m\\
                 &=\Theta(e_n.m).\\
S^2(f_n)\Theta(m)&=\omega^\prime_n f_n \omega^{\prime-1}_n (rs^{-1})^{-2(\rho,\lambda)}.m\\
                 &=\langle \omega^\prime_n,\omega_n \rangle(rs^{-1})^{-2(\rho,\lambda)}f_n.m\\
                 &=(rs^{-1})^{-2(\rho,\lambda-\alpha_n)}e_n.m\\
                 &=\Theta(f_n.m).
\end{align*}
\begin{eqnarray*}
\Theta(\omega_{i}. m)=\varrho^\lambda(\omega_{i})(rs^{-1})^{-2(\rho,\lambda)}m, &\quad&S^2(\omega_{i})\Theta. m=\varrho^\lambda(\omega_{i})(rs^{-1})^{-2(\rho, \lambda)}m, \\
\Theta(\omega^\prime_{i}.m)=\varrho^\lambda(\omega^\prime_{i})(rs^{-1})^{-2(\rho, \lambda)}m,
&\quad&
S^2(\omega^\prime_{i})\Theta. m=\varrho^\lambda(\omega^\prime_{i})(rs^{-1})^{-2(\rho, \lambda)}m, \\
\Theta(e_{i}. m)=S^2(e_{i})\Theta(m), &\quad&
\Theta(f_{i}. m)=S^2(f_{i})\Theta(m), \\
\Theta(\omega_{i}. m)=S^2(\omega_{i})\Theta(m),
&\quad&\Theta\omega^\prime_{i}. m=S^2(\omega^\prime_{i})\Theta(m).
\end{eqnarray*}
So $\Theta u=S^2(u)\Theta.$~
\end{upshape}
\end{proof}

For $\lambda\in\Lambda$, define $f_{\lambda}\in U^*,~
f_{\lambda}(u)=\mathrm{tr}_{L(\lambda)}(u\Theta),~u\in U$.

\begin{yinli}
\begin{upshape}
If $\lambda\in \Lambda^+\cap Q$, then $f_{\lambda}\in
\mathrm{Im}(\beta)$.~
\end{upshape}
\end{yinli}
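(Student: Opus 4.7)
The plan is to reduce the statement to the preceding theorem, which has already produced, for each matrix-coefficient functional $C_{f,m}$ on a finite-dimensional $U$-module, a preimage in $U$ under $\beta$. The trace $f_{\lambda}$ is visibly a finite $\mathbb{K}$-linear combination of such coefficient functionals once one diagonalises the twist $\Theta$.

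Since $\lambda\in\Lambda^{+}$, the module $L(\lambda)$ is finite dimensional and admits a weight-space decomposition $L(\lambda)=\bigoplus_{\mu}L(\lambda)_{\mu}$. I would first choose a weight basis $\{m_{i}\}$ of $L(\lambda)$ with $m_{i}\in L(\lambda)_{\mu_{i}}$, together with its dual basis $\{f_{i}\}\subset L(\lambda)^{*}$, and then expand the trace. Using that $\Theta$ acts on $L(\lambda)_{\mu_{i}}$ by the scalar $(rs^{-1})^{-2(\rho,\mu_{i})}$, one obtains, for every $v\in U$,
\[
f_{\lambda}(v)=\mathrm{tr}_{L(\lambda)}(v\Theta)=\sum_{i}f_{i}(v\Theta.m_{i})=\sum_{i}(rs^{-1})^{-2(\rho,\mu_{i})}f_{i}(v.m_{i})=\sum_{i}c_{i}\,C_{f_{i},m_{i}}(v),
\]
where $c_{i}:=(rs^{-1})^{-2(\rho,\mu_{i})}$. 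The hypothesis $\lambda\in Q$ enters precisely here: since the weights of $L(\lambda)$ lie in $\lambda-Q^{+}\subseteq Q$, the numbers $2(\rho,\mu_{i})$ are integers, so the scalars $c_{i}$ are unambiguous elements of $\mathbb{K}^{\times}$.

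By the preceding theorem, for each pair $(f_{i},m_{i})$ there exists $u_{i}\in U$ with $C_{f_{i},m_{i}}=\beta(u_{i})$. Since $L(\lambda)$ is finite dimensional the index $i$ ranges over a finite set, and setting $u:=\sum_{i}c_{i}u_{i}\in U$, the $\mathbb{K}$-linearity of $\beta$ yields $\beta(u)=f_{\lambda}$; hence $f_{\lambda}\in\mathrm{Im}(\beta)$.

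I do not anticipate a serious obstacle here: the substantive content, namely producing for each pair $(f,m)$ an element $u\in U$ such that $C_{f,m}(v)=\langle u,v\rangle_{U}$, has already been carried out in the preceding theorem, where $u$ was explicitly assembled out of pieces lying in $U^{-\nu}(\mathfrak{n}^{-})U_{0}U^{+\mu}(\mathfrak{n}^{+})$. The present lemma is just the clean packaging of a finite sum of such matrix coefficients, weighted by the eigenvalues of $\Theta$, into a single trace functional.
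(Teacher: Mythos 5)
Your argument is correct and is essentially the paper's own proof: expand the trace in a (weight) basis with its dual basis to write $f_{\lambda}$ as a finite sum of matrix coefficients $C_{f_i,\Theta(m_i)}$ (equivalently, $c_i\,C_{f_i,m_i}$ with the eigenvalues of $\Theta$ pulled out), then invoke the preceding theorem to realize each coefficient as $\beta(u_i)$ and sum. The only cosmetic difference is that the paper absorbs $\Theta$ into the second argument of $C_{f,m}$ rather than diagonalising it explicitly.
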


\begin{proof}[\upshape\KAI Proof:]
\begin{upshape}
Let $k=\dim L(\lambda)$, $\{m_{i}\}$ is a basis of $L(\lambda)$, ~$\{f_{i}\}$ is the dual basis of $L(\lambda)^*$.
$$v\Theta(m_i)=\sum\limits_{j=1}^{k}f_j(v\Theta(m_j))m_j=\sum\limits_{j=1}^{k}C_{f_j,\Theta(m_j)}(v)m_j$$
\begin{align*}
f_{\lambda}(v)=\mathrm{tr}_{L(\lambda)}(v\Theta)=\sum\limits_{i=1}^k
C_{f_{i}, \Theta (m_i)}(v).
\end{align*}
By Theorem \ref{findU}, there is a $u_{i}\in U$ such that $C_{f_{i},\Theta
(m_i)}(v)=\langle u_{i}$,~$v\rangle_{U}$ for all $1\leq i\leq k$. Let $u=\sum\limits_{i=1}^k u_{i}$,
\begin{align*}
\beta(u)(v)=\langle u,v\rangle_U=\sum\limits_{i=1}^k\langle u_{i}, v
\rangle_{U}=\sum\limits_{i=1}^k C_{f_{i}, \Theta(m_i)}(v)=f_{\lambda}(v).
\end{align*}
So $f_{\lambda}\in \mathrm{Im}(\beta)$.
\end{upshape}
\end{proof}

\begin{dingli}
\begin{upshape}   
For $\lambda\in\Lambda^+\cap Q$,
$z_{\lambda}:=\beta^{-1}(f_{\lambda})\in Z(U)$.~
\end{upshape}
\end{dingli}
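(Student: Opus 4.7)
The plan is to invoke the criterion of Lemma~\ref{isCenterElement}, which reduces the centrality of $z_\lambda$ to verifying $\mathrm{ad_{l}}(x)\,z_\lambda=(\iota\circ\varepsilon)(x)\,z_\lambda$ for every $x\in U$. Since the Rosso form is $\mathrm{ad_{l}}$-invariant and $\beta$ is injective, it will suffice to establish the corresponding identity in $U^*$ for $f_\lambda=\beta(z_\lambda)$, namely $x.f_\lambda=\varepsilon(x)f_\lambda$ for all $x\in U$. Indeed, $\mathrm{ad_{l}}$-invariance of $\langle\,,\,\rangle_U$ gives, for any $u,v\in U$,
\[
\beta(\mathrm{ad_{l}}(x)u)(v)=\langle u,\mathrm{ad_{l}}(S(x))v\rangle_U=\beta(u)(\mathrm{ad}(S(x))v)=(x.\beta(u))(v),
\]
so $\beta$ intertwines the left adjoint action on $U$ with the $U$-module structure on $U^*$.

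The main step is then the trace computation showing $x.f_\lambda=\varepsilon(x)f_\lambda$. Using Sweedler notation $\Delta(x)=\sum x_{(1)}\otimes x_{(2)}$ and the Hopf-algebra identity $\Delta\circ S=(S\otimes S)\circ\tau\circ\Delta$, one has $\mathrm{ad}(S(x))v=\sum S(x_{(2)})\,v\,S^2(x_{(1)})$, so
\[
(x.f_\lambda)(v)=f_\lambda(\mathrm{ad}(S(x))v)=\mathrm{tr}_{L(\lambda)}\!\Big(\sum S(x_{(2)})\,v\,S^2(x_{(1)})\,\Theta\Big).
\]
Applying the intertwining relation $\Theta u=S^2(u)\Theta$ (established in the preceding lemma) in the form $S^2(x_{(1)})\Theta=\Theta\,x_{(1)}$, then invoking the cyclicity of the trace and the counit axiom $\sum x_{(1)}S(x_{(2)})=\varepsilon(x)\cdot 1$, we obtain
\[
(x.f_\lambda)(v)=\mathrm{tr}_{L(\lambda)}\!\Big(\sum x_{(1)}S(x_{(2)})\,v\,\Theta\Big)=\varepsilon(x)\,\mathrm{tr}_{L(\lambda)}(v\Theta)=\varepsilon(x)f_\lambda(v).
\]
Combined with the equivariance of $\beta$ displayed above and its injectivity, this yields $\mathrm{ad_{l}}(x)z_\lambda=\varepsilon(x)z_\lambda$, so $z_\lambda\in Z(U)$ by Lemma~\ref{isCenterElement}.

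The only delicate point is the trace manipulation; in particular, the scalar $(rs^{-1})^{-2(\rho,\lambda)}$ defining $\Theta$ on weight spaces is engineered precisely so that the $S^2$-twist produced by the antipode/coproduct identity is absorbed by $\Theta$ and washed out by the trace. Everything else — the injectivity of $\beta$, the $\mathrm{ad_{l}}$-invariance of the Rosso form, and the centrality criterion — is supplied by results already proved, and the hypothesis $\lambda\in\Lambda^+\cap Q$ is used only through the previous lemma to ensure that $f_\lambda$ lies in $\mathrm{Im}(\beta)$ so that $z_\lambda=\beta^{-1}(f_\lambda)$ is well defined in $U$.
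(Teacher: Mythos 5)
Your proposal is correct and follows essentially the same route as the paper: the trace computation using $\Theta u=S^2(u)\Theta$, cyclicity, and the antipode axiom to get $x.f_\lambda=\varepsilon(x)f_\lambda$, then transferring this back through the injective, $\mathrm{ad_l}$-equivariant map $\beta$ and invoking the centrality criterion of Lemma~\ref{isCenterElement}. The only cosmetic difference is that you compute $(x.f_\lambda)(v)$ directly, whereas the paper computes $(S^{-1}(x)f_\lambda)(u)$ and then substitutes $x\mapsto S(x)$; the ingredients and logic are identical.
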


\begin{proof}[\upshape\KAI Proof:]
\begin{upshape}
 For all $x\in U$,
\begin{align*}
(S^{-1}(x)f_{\lambda})(u)&=f_{\lambda}(\mathrm{ad}(x)u)\\
&=\mathrm{tr}_{L(\lambda)}\Big(\sum\limits_{(x)}x_{(1)}uS(x_{(2)})\Theta\Big)\\
&=\mathrm{tr}_{L(\lambda)}\Big(u\sum\limits_{(x)}S(x_{(2)})\Theta (x_{(1)})\Big)\\
&=\mathrm{tr}_{L(\lambda)}\Big(u\sum\limits_{(x)}S(x_{(2)})S^2(x_{(1)})\Theta \Big)\\
&=\mathrm{tr}_{L(\lambda)}\Big(uS\Big(\sum\limits_{(x)}S(x_{(1)})x_{(2)}\Big)\Theta \Big)\\
&=(\iota\circ\varepsilon)(x)\mathrm{tr}_{L(\lambda)}(u\Theta)=(\iota\circ\varepsilon)(x)f_{\lambda}(u).
\end{align*}
Replacing $x$ with $S(x)$ in the above identity and noticing that $\varepsilon\circ S=\varepsilon$, we can get
$xf_{\lambda}=(\iota\circ\varepsilon)(x)f_{\lambda}$. Indeed, we have
\begin{align*}
xf_{\lambda}=x\beta(\beta^{-1}(f_{\lambda}))=\beta(\mathrm{ad}(S(x))\beta^{-1}(f_{\lambda})), \\
(\iota\circ\varepsilon)(x)f_{\lambda}=(\iota\circ\varepsilon)(x)\beta(\beta^{-1}(f_{\lambda}))
=\beta((\iota\circ\varepsilon)(x)\beta^{-1}(f_{\lambda})).
\end{align*}
\par
Because $\beta$ is injective,
$\mathrm{ad}(S(x))\beta^{-1}(f_{\lambda})=(\iota\circ\varepsilon)(x)\beta^{-1}(f_{\lambda})$.
Since $\varepsilon\circ S^{-1}=\varepsilon$,
 replacing $x$ with $S^{-1}(x)$ in the above formula, we get
\begin{align*}
\mathrm{ad}(x)\beta^{-1}(f_{\lambda})=(\iota\circ\varepsilon)(x)\beta^{-1}(f_{\lambda}),
~\text{for all}~x\in U.
\end{align*}
By Lemma \ref{isCenterElement}, $\beta^{-1}(f_{\lambda})\in Z(U)$.
\end{upshape}
\end{proof}

\begin{dingli}
\begin{upshape}
As for two-parameter quantum groups of type $B_{2n}$, if  $r^{k}s^l=1$ $\iff$ $k=l=0$, then
 $\xi: Z(U)\rightarrow(U_{\flat}^0)^W$ is an algebra isomorphism.
\end{upshape}
\end{dingli}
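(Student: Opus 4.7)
From the previous results of this paper, $\xi\colon Z(U)\to U_0$ is already known to be an injective algebra homomorphism whose image is contained in $(U^0_\flat)^W$ (under the standing hypotheses that $r^{k}s^{\ell}=1\iff k=\ell=0$ and the rank is even). Hence only surjectivity onto $(U^0_\flat)^W$ remains. The strategy is to exhibit enough central elements whose $\xi$-images span $(U^0_\flat)^W$, using the family $z_\lambda=\beta^{-1}(f_\lambda)\in Z(U)$ already constructed for $\lambda\in\Lambda^+\cap Q$.

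The first step is to compute the ``leading term'' of $\xi(z_\lambda)$. For $\mu\in\Lambda$ the central element $z_\lambda$ acts on the Verma module $M(\varrho^{\mu,0})$ by the scalar $\varrho^{\mu+\rho,0}(\xi(z_\lambda))$, and this same scalar can be read off from the trace formula $f_\lambda(u)=\operatorname{tr}_{L(\lambda)}(u\Theta)$ combined with the definition of the Rosso form and the prescription that $\Theta$ acts on $L(\lambda)_\nu$ by $(rs^{-1})^{-2(\rho,\nu)}$. Unwinding these identities, together with $\dim L(\lambda)_\lambda=1$ and $L(\lambda)_\nu=0$ unless $\nu\le\lambda$, one expects $\xi(z_\lambda)$, expanded in the basis $\{\omega'_\nu\omega_{-\nu}\}_{\nu\in Q}$ of $U^0_\flat$, to have a nonzero coefficient at $\nu=\lambda$ and all other nonzero coefficients only at $\nu$ with $\lambda-\nu\in Q^+\setminus\{0\}$.

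The second step is a triangular-systems argument for surjectivity. Because $\dim L(\lambda)_\mu=\dim L(\lambda)_{\sigma(\mu)}$ (Theorem~2.3) and $\xi(z_\lambda)$ lies in $(U^0_\flat)^W$ by the previous theorem, the expansion of Step~1 is automatically a linear combination of the $W$-orbit sums $S_\nu=\sum_{\mu\in W\cdot\nu}\omega'_\mu\omega_{-\mu}$; these form a $\mathbb{K}$-basis of $(U^0_\flat)^W$ indexed by $\nu\in\Lambda^+\cap Q$. By Step~1 the leading term of $\xi(z_\lambda)$ is a nonzero multiple of $S_\lambda$, and all remaining $S_\nu$ that appear satisfy $\nu\prec\lambda$ in the dominance order. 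Ordering $\Lambda^+\cap Q$ by dominance and inducting on this order, the family $\{\xi(z_\lambda)\}_{\lambda\in\Lambda^+\cap Q}$ therefore spans $(U^0_\flat)^W$, yielding surjectivity and hence, combined with the injectivity already established, the desired isomorphism.

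The main obstacle will be the precise computation in Step~1: identifying $\xi(z_\lambda)$ inside $U_0$ from the functional $f_\lambda\in U^*$ requires a careful use of the Rosso-form duality and the triangular decomposition, and one must verify that the result actually lives in $U^0_\flat$ rather than in a strictly larger subspace of $U_0$. The parity hypothesis that the rank is even enters here in an essential way, since it is exactly the restriction that forces the image of $\xi$ to lie in $U^0_\flat$ via Lemma~\ref{importantlemma}; without it the proposed identification of $\xi(z_\lambda)$ with an element indexed by $\Lambda^+\cap Q$ would break down, mirroring the failure of injectivity in the odd-rank case. Once the leading-term formula is secured, the dominance induction of Step~2 is formal.
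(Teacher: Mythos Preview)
Your proposal is correct and follows essentially the same strategy as the paper. Two clarifications on Step~1: the paper does not pass through Verma-module scalars but instead computes $z_{\lambda,0}=\pi(z_\lambda)$ by evaluating the Rosso pairing $\langle z_\lambda,\omega'_{\eta_1}\omega_{\phi_1}\rangle_U=f_\lambda(\omega'_{\eta_1}\omega_{\phi_1})=\mathrm{tr}_{L(\lambda)}(\omega'_{\eta_1}\omega_{\phi_1}\Theta)$ and then invoking the linear independence of the characters $\chi_{\eta,\phi}$ (Lemma~\ref{chracter}) to read off the coefficients, which gives the closed formula $\xi(z_\lambda)=\sum_{\mu\leq\lambda}\dim L(\lambda)_\mu\,\omega'_\mu\omega_{-\mu}$ directly. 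In particular, the membership $\pi(z_\lambda)\in U^0_\flat$ falls out of this character comparison \emph{without} any parity assumption; the even-rank hypothesis is used only for the injectivity of $\xi$ and for the inclusion $\xi(Z(U))\subseteq(U^0_\flat)^W$, not in the surjectivity step, so your last paragraph slightly mislocates where parity is doing work.
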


\begin{proof}[\upshape\KAI Proof:]
\begin{upshape}
$z_{\lambda}=\beta^{-1}(f_{\lambda}),
\text{for}~\lambda\in\Lambda^+\cap Q$
\begin{align*}
z_{\lambda}=\sum\limits_{\nu\geq 0}z_{\lambda, \nu},\quad
z_{\lambda, 0}=\sum\limits_{(\eta, \phi)\in Q\times Q}\theta_{\eta,
\phi}\omega^\prime_{\eta}\omega_{\phi}.
\end{align*}
Here $z_{\lambda, \nu}\in U^{-\nu}(\mathfrak{n}^-)U_{0}U^{\nu}(\mathfrak{n}^+)~
\text{and}~\theta_{\eta, \phi}\in\mathbb{K}$. $(\eta_{i},
\phi_{i})\in Q\times Q$,
\begin{align*}
\langle z_{\lambda},
\omega^\prime_{\eta_{i}}\omega_{\phi_{i}}\rangle_{U}=\langle
z_{\lambda, 0},
\omega^\prime_{\eta_{i}}\omega_{\phi_{i}}\rangle=\sum\limits_{\eta,
\phi}\theta_{\eta, \phi}\langle\omega^\prime_{\eta_{i}},
\omega_{\phi}\rangle\langle\omega^\prime_{\eta},
\omega_{\phi_{i}}\rangle.
\end{align*}\par
On the other hand,
\begin{align*}
\langle z_{\lambda},
\omega^\prime_{\eta_{i}}\omega_{\phi_{i}}\rangle_{U}&=\beta(z_{\lambda})(\omega^\prime_{\eta_{i}}\omega_{\phi_{i}})
=f_{\lambda}(\omega^\prime_{\eta_{1}}\omega_{\phi_{i}})=\mathrm{tr}_{L(\lambda)}(\omega^\prime_{\eta_{i}}\omega_{\phi_{i}}\Theta)\\
&=\sum\limits_{\mu\leq
\lambda}\dim(L(\lambda)_{\mu})(rs^{-1})^{-2(\rho, \mu)}\varrho^\mu(\omega^\prime_{\eta_{i}}\omega_{\phi_{i}})\\
&=\sum\limits_{\mu\leq
\lambda}\dim(L(\lambda)_{\mu})(rs^{-1})^{-2(\rho,
\mu)}\langle\omega^\prime_{\eta_{i}},
\omega_{-\mu}\rangle\langle\omega^\prime_{\mu},
\omega_{\phi_{i}}\rangle.
\end{align*}
It can be rewritten as
\begin{align*}
\sum\limits_{(\eta, \phi)}\theta_{\eta, \phi}\chi_{\eta,
\phi}=\sum\limits_{\mu\leq\lambda}\dim(L(\lambda)_{\mu})(rs^{-1})^{-2(\rho,
\mu)}\chi_{\mu, -\mu}.
\end{align*}
So
\begin{equation*}\theta_{\eta, \phi}=
\begin{cases}\dim(L(\lambda)_{\mu})(rs^{-1})^{-2(\rho, \mu)},&\eta+\phi=0, \\
0,&\text{otherwise. }
\end{cases}
\end{equation*}\par
\begin{equation}
z_{\lambda, 0}=\sum\limits_{\mu\leq\lambda}(rs^{-1})^{-2(\rho, \mu)}\dim(L(\lambda)_{\mu})\omega^\prime_{\mu}\omega_{-\mu},
\end{equation}
\begin{align*}
\xi(z_{\lambda})
&=\gamma^{-\rho}(z_{\lambda,0})\\
&=\sum\limits_{\mu\leq\lambda}(rs^{-1})^{-2(\rho,\mu)}\dim(L(\lambda)_{\mu})
  \varrho^{-\rho}(\omega^{\prime}_{\mu}\omega_{-\mu})\omega^{\prime}_{\mu}\omega_{-\mu}\\
&=\sum\limits_{\mu\leq\lambda}\dim(L(\lambda)_{\mu})\omega^\prime_{\mu}\omega_{-\mu}.
\end{align*}

We only need to prove $(U_{\flat}^0)^W\subseteq\xi(Z(U)) $. Let
$\lambda\in\Lambda^+\cap Q$, define \vspace{0.2cm}
\begin{align}
av(\lambda)=\frac{1}{|W|}\sum\limits_{\sigma\in
W}\sigma(\omega^\prime_{\lambda}\omega_{-\lambda})=\frac{1}{|W|}\sum\limits_{\sigma\in
W}\omega^\prime_{\sigma(\lambda)}\omega_{-\sigma(\lambda)}.
\end{align}
For arbitrary $\eta\in Q$, there exists $\sigma\in W$ such that $\sigma(\eta)\in\Lambda^+\cap Q$, so
$\big\{av(\lambda)\big|\lambda\in\Lambda^+\cap Q\big\}$ is a basis of $(U_{\flat}^0)^W$. Right now we only need to prove $av(\lambda)\in
\mathrm{Im}(\xi)$. By induction on the height of $\lambda$, when $\lambda=0$, $av(\lambda)=1=\xi(1)$,
since $\dim L(\lambda)_{\mu}=\dim L(\lambda)_{\sigma(\mu)}$, for all $\sigma\in W$, $\dim L(\lambda)_{\lambda}=1$.
\begin{align*}
\xi(z_{\lambda})=|W|av(\lambda)+|W|\sum \dim
(L(\lambda)_{\mu)})av(\mu),
\end{align*}
where $\mu<\lambda ,~\mu\in\Lambda^+\cap Q$. By induction, $av(\lambda)\in \mathrm{Im}(\xi)$. So
$(U_{\flat}^0)^W\subseteq\xi(Z(U))$.
\end{upshape}
\end{proof}

\bibliographystyle{amsalpha}

\end{document}